\author{Rafael Torres}
\title{Irreducible 4-Manifolds with Abelian Non-cyclic Fundamental
Group of Small Rank}
\address{Max-Planck Institut f\"ur Mathematik\\Vivatsgasse 7\\53111\\Bonn, Germany}
\email{torres@mpim-bonn.mpg.de}
\date{June 24, 2009. We thank S. Baldridge, J. Hillman, P. Kirk and M. Kreck for their useful comments on an earlier version of the paper. We thank the Max-Planck Institut f\"ur Mathematik in Bonn for its
warm hospitality and amazing working conditions. This work was realized under an IMPRS scholarship from the Max-Planck Society.}
\theoremstyle{plain}
\newtheorem{theorem}[equation]{Theorem}
\newtheorem{proposition}[equation]{Proposition}
\newtheorem{lemma}[equation]{Lemma}
\newtheorem{remark}{Remark}
\theoremstyle{definition}
\newtheorem{definition}[equation]{Definition}
\newcommand{\R}{\mathbb{R}}
\newcommand{\Z}{\mathbb{Z}}
\begin{document}

\maketitle

In this paper we construct several irreducible 4-manifolds, both small and arbitrarily large, with abelian non-cyclic fundamental group. The manufacturing procedure
allows us to fill in numerous points in the geography plane of symplectic manifolds with $\pi_1 = \Z \oplus \Z, \Z\oplus \Z_p$ and $\Z_q \oplus \Z_p$ ($gcd(p, q) \neq
1$). We then study the botany of these points for $\pi_1 = \Z_p \oplus \Z_p$.\\

\section{Manufactured Manifolds}

The main results in this paper are:

\begin{theorem} Let $G$ be either $\Z \oplus \Z$, $\Z \oplus \Z_p$ or $\Z_q \oplus \Z_p$. Let $n\geq 1$ and $m\geq 1$. For each of the following pairs of
integers
\begin{enumerate}
\item $(c, \chi) = (7n, n)$,
\item $(c, \chi) = (5n, n)$,
\item $(c, \chi) = (4n, n)$,
\item $(c, \chi) = (2n, n)$,
\item $(c, \chi) = ((6 + 8g)n, (1 + g)n$ (for $g\geq 0$),
\item $(c, \chi) = (7n + (6 + 8g)m, n + (1 + g)m)$,
\item $(c, \chi) = (7n + 5m, n + m)$,
\item $(c, \chi) = (7n + 4m, n + m)$,
\item $(c, \chi) = (7n + 2m, n + m)$,
\item $(c, \chi) = ((6 + 8g)n + 5m,(1 + g)n + m)$ (for $g\geq 0$),
\item $(c, \chi) = ((6 + 8g)n + 4m,(1 + g)n + m)$ (for $g\geq 0$),
\item $(c, \chi) = ((6 + 8g)n + 2m,(1 + g)n + m)$ (for $g\geq 0$),
\item $(c, \chi) = (5n + 4m , n + m)$,
\item $(c, \chi) = (5n + 2m , n + m)$,
\item $(c, \chi) = (4n + 2m , n + m)$ and
\end{enumerate}
there exists a symplectic irreducible 4-manifold $X$ with 
\begin{center} 
$\pi_1(X) = G$ and $(c_1^2(X), \chi_h(X)) = (c, \chi)$.\\
\end{center}
\end{theorem}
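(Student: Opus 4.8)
The plan is to reduce the fifteen families to five primitive building blocks and to assemble the remaining cases by iterated symplectic fiber summation along tori. If $X_1$ and $X_2$ are symplectic $4$-manifolds, each containing a symplectically embedded torus of self-intersection zero, then the Gompf fiber sum $X_1 \#_{T^2} X_2$ is again symplectic; since $\chi(T^2) = 0$ and the normal bundle is trivial, the Euler characteristic adds by inclusion-exclusion and the signature adds by Novikov additivity, so the pair $(c_1^2, \chi_h)$ is additive under this operation. Under this reduction, families $(1)$-$(5)$ are fiber sums of $n$ copies of the primitive pairs $(7,1)$, $(5,1)$, $(4,1)$, $(2,1)$, and $(6 + 8g, 1 + g)$, while the mixed families $(6)$-$(15)$ are precisely the fiber sums of $n$ copies of one primitive block with $m$ copies of another. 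It therefore suffices to construct the five primitive blocks with $\pi_1 = G$ for each of the three groups $G$ and to arrange that the fiber summation preserves $\pi_1 = G$.

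First I would build the primitive blocks. Starting from standard minimal symplectic manifolds whose characteristic numbers already realize the required slopes --- products $T^2 \times \Sigma_h$, elliptic surfaces, and twisted products, with the genus parameter $g$ of block $(5)$ supplied by the genus of a surface factor --- I would correct the fundamental group by a sequence of Luttinger surgeries on pairwise disjoint Lagrangian tori. Each such surgery is performed on a Lagrangian torus of self-intersection zero, hence leaves $(c_1^2, \chi_h)$ unchanged and keeps the manifold symplectic, while introducing a relation that kills a prescribed loop in $\pi_1$. Careful bookkeeping of these relations reduces the (typically much larger) fundamental group of the starting manifold to exactly $\Z \oplus \Z$, and additional surgeries inserting torsion relations pass to the quotients $\Z \oplus \Z_p$ and $\Z_q \oplus \Z_p$.

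The principal obstacle is ensuring that the fiber summation preserves $\pi_1 = G$. Van Kampen's theorem presents $\pi_1(X_1 \#_{T^2} X_2)$ as a quotient of the free product of $\pi_1(X_1 \setminus \nu T)$ and $\pi_1(X_2 \setminus \nu T)$ amalgamated over the image of $\pi_1$ of the gluing torus together with its meridian, and a careless choice of gluing torus yields a group far larger than $G$. I would therefore select the gluing tori so that the generators of $G$ are carried by the image of $\pi_1(T^2)$ and so that the meridians are nullhomotopic in both complements; the amalgamation then identifies the two copies of $G$ along a common subgroup surjecting onto $G$, forcing the sum to collapse back to $G$. Establishing that such tori exist in each primitive block, survive the Luttinger surgeries above, and remain available after each successive sum is where the bulk of the geometric work resides.

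Finally, every manufactured $X$ is symplectic by construction, and minimal: minimality is preserved by Luttinger surgery and, by Usher's theorem, survives fiber summation of minimal blocks along tori. Since $G$ is abelian it is residually finite, so by the theorem of Hamilton and Kotschick a minimal symplectic $4$-manifold with residually finite fundamental group is irreducible; in the cases with $b_2^+(X) > 1$ this also follows directly from Taubes' nonvanishing of the Seiberg-Witten invariant. Hence each $X$ is an irreducible symplectic $4$-manifold with $\pi_1(X) = G$ and $(c_1^2(X), \chi_h(X)) = (c, \chi)$, as required.
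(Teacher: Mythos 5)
Your overall architecture --- five primitive blocks realizing $(7,1)$, $(5,1)$, $(4,1)$, $(2,1)$, $(6+8g,1+g)$, characteristic numbers added by torus fiber sums, fundamental groups adjusted by Luttinger surgery, minimality from Usher, irreducibility from Hamilton--Kotschick --- is the paper's architecture. But your order of operations contains a genuine gap. You propose to perform the torsion-introducing Luttinger surgeries on the primitive blocks \emph{first}, obtaining blocks with $\pi_1 = G$, and only then to fiber sum them, relying on gluing tori whose meridians are nullhomotopic in the complements and whose $\pi_1$-images carry generators of $G$. The problem is that the surgeries consume exactly the tori that have these properties. Each primitive block comes equipped with only two suitable Lagrangian tori $T_1, T_2$ (in the paper's language, the block is a \emph{telescoping triple}). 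A $+1/p$ surgery on $T_1$ replaces it by a core torus $\Lambda$ whose meridian, viewed in the complement, is precisely the class $l_{T_1}^p$ that the surgery kills; in particular $\Lambda$ has \emph{non}-trivial meridian in $\pi_1(Y - \Lambda) \cong \Z^2$, so it cannot serve as a gluing torus in your Van Kampen scheme --- summing along it would resurrect the infinite-order element you just killed. Consequently, a block with $\pi_1 = \Z \oplus \Z_p$ retains only one usable torus, so you can sum at most two such blocks and the families requiring $n+m \geq 3$ blocks are out of reach; and a block with $\pi_1 = \Z_q \oplus \Z_p$ retains none, so for that group the composite families (6)--(15) cannot be formed at all. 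You flag ``remain available after each successive sum'' as the remaining geometric work, but as you have set things up this is not a verification to be completed: the required tori do not exist.

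The fix is to reverse the order, which is what the paper does. It keeps $\pi_1 = \Z^2$ throughout the summing stage: the conditions you identified (trivial meridians, $\pi_1(T_1)$ mapping onto a $\Z$-summand, $\pi_1(T_2) \to \pi_1(X)$ an isomorphism) are packaged into the definition of a telescoping triple, and the key proposition (from Akhmedov--Baldridge--Baykur--Kirk--Park) is that this class is \emph{closed} under symplectic sum: $(X \#_{T_2,T_1'}X', T_1, T_2')$ is again a telescoping triple, with two fresh tori available for the next sum. All fifteen families are first realized this way with $\pi_1 = \Z^2$, and only then, on the final composite triple, are the $+1/p$ and $+1/q$ Luttinger surgeries applied to $T_1$ and $T_2$ to produce $\Z \oplus \Z_p$ and $\Z_q \oplus \Z_p$; since Luttinger surgery preserves $e$, $\sigma$, the symplectic structure, and (by Usher and Hamilton--Kotschick) minimality and irreducibility, nothing further is needed after the surgeries. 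A smaller point in the same direction: your gluing tori must be symplectic for the Gompf sum, but the designated tori are Lagrangian, so the symplectic form has to be perturbed before each sum; this is exactly why the definition of a telescoping triple requires $T_1, T_2$ to span a two-dimensional subspace of $H_2(X;\R)$, a condition absent from your setup.
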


\begin{proposition} Fix $\pi_1(X) = \Z_p \oplus \Z_p$, where $p$ is a prime number greater than two. Let $(c, \chi)$ be any pair of 
integers given in Theorem 1 such that $n + m \geq 2$. There
exists an infinite family $\{X_n\}$ of homeomorphic, pairwise non-diffeomorphic irreducible smooth non-symplectic 4-manifolds realizing the coordinates $(c,
\chi)$.
\end{proposition}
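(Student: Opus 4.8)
The plan is to start from a fixed symplectic irreducible 4-manifold $X$ furnished by Theorem 1 realizing the coordinates $(c,\chi)$ with $\pi_1(X) = \Z_p \oplus \Z_p$, and to manufacture the family $\{X_n\}$ by Fintushel--Stern knot surgery. First I would record that the hypothesis $n + m \geq 2$ guarantees $\chi_h(X) \geq 2$; since $\pi_1(X)$ is finite we have $b_1(X) = 0$ and hence $b^+(X) = 2\chi_h(X) - 1 \geq 3 > 1$, so the Seiberg--Witten invariants of $X$ and of every manifold obtained from it by surgery are defined and behave in the standard ($b_1 = 0$) fashion. The essential geometric input is a homologically essential torus $T \subset X$ of self-intersection zero, of infinite order in $H_2(X;\Z)$, sitting inside a cusp neighborhood and satisfying $\pi_1(X \setminus T) \cong \pi_1(X)$ with nullhomotopic meridian; such a torus is supplied by the fiber-sum building blocks underlying the construction of Theorem 1. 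For an infinite family of knots $K_n \subset S^3$ I then form $X_n := X_{K_n}$, the result of replacing a tubular neighborhood $T \times D^2$ by $(S^3 \setminus N(K_n)) \times S^1$.

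Because $\pi_1(X\setminus T) \cong \pi_1(X)$ and the meridian of $T$ dies in the complement, a van Kampen computation gives $\pi_1(X_n) \cong \Z_p \oplus \Z_p$ for every $n$, and surgery along a self-intersection-zero torus leaves the Euler characteristic and the signature unchanged, so each $X_n$ realizes the same $(c,\chi)$. By the Fintushel--Stern product formula, $\mathrm{SW}_{X_n} = \mathrm{SW}_X \cdot \Delta_{K_n}(t)$ with $t = \exp(2[T])$; since $X$ is symplectic with $b^+ > 1$, Taubes gives $\mathrm{SW}_X \neq 0$, so the basic classes and coefficients of $X_n$ are governed by $\Delta_{K_n}$. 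Choosing the $K_n$ to have pairwise distinct Alexander polynomials (with $[T]$ of infinite order, so these polynomials are detected in $\mathrm{SW}_{X_n}$) makes the sets of Seiberg--Witten basic classes pairwise distinct, whence the $X_n$ are pairwise non-diffeomorphic. Taking in addition each $\Delta_{K_n}$ to be non-monic (extremal coefficient $\neq \pm 1$) violates Taubes' constraint that a symplectic $b^+ > 1$ manifold has $\mathrm{SW}$ of extremal coefficient $\pm 1$ on its canonical class, so every $X_n$ is non-symplectic.

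It remains to see that the $X_n$ are all homeomorphic and irreducible. For the homeomorphism type, knot surgery preserves $\pi_1$, the (equivariant) intersection form, the type, and the Kirby--Siebenmann invariant; for the finite fundamental group $\Z_p \oplus \Z_p$ with $p$ odd these data determine the oriented homeomorphism type by the classification of Hambleton--Kreck, so all $X_n$ are homeomorphic to $X$. For irreducibility, suppose $X_n = A \,\#\, B$; since $\Z_p \oplus \Z_p$ is freely indecomposable, one summand, say $B$, is simply connected, and since $b^+(X_n) > 1$ and $\mathrm{SW}_{X_n} \neq 0$ the connected-sum vanishing theorem forces $b^+(B) = 0$, so $B$ is negative definite, hence $B = \#^{k}\overline{\C P^2}$ or $B = S^4$ by Donaldson's theorem. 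Because $X$ is minimal symplectic (its fiber-sum pieces are minimal) and knot surgery preserves minimality, $X_n$ carries no $(-1)$-sphere, ruling out $k > 0$; thus $B = S^4$ and $X_n$ is irreducible.

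The main obstacle I anticipate is the geometric bookkeeping needed to exhibit the torus $T$: one must verify, within the specific fiber-sum assembly of Theorem 1, that there is a self-intersection-zero torus that is simultaneously of infinite order in homology (so that distinct Alexander polynomials are seen by the product formula), carries the cusp neighborhood needed for the formula to apply, and has complement with fundamental group still equal to $\Z_p \oplus \Z_p$ and nullhomotopic meridian (so that $\pi_1$ is unchanged). A secondary, more technical point is invoking the correct topological classification for the non-cyclic group $\Z_p \oplus \Z_p$ to guarantee that the shared invariants genuinely pin down a single homeomorphism type.
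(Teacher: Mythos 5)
Your plan hinges entirely on the existence, inside each manifold $X$ produced by Theorem 1, of a homologically essential square-zero torus $T$ lying in a cusp neighborhood with $\pi_1(X \setminus T) \cong \pi_1(X)$ and nullhomotopic meridian. You flag this yourself as ``the main obstacle,'' but flagging it does not fill it, and it is precisely the step that fails: the manifolds of Theorem 1 are assembled from telescoping triples by Luttinger surgeries, and the only distinguished tori in sight are the Lagrangian tori $T_1, T_2$, which are consumed by the surgeries that cut $\pi_1$ down to $\Z_p \oplus \Z_p$. Nothing in the construction exhibits a c-embedded (cusp) torus, and these small manifolds are not elliptic fibrations where such fibers come for free. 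The cusp hypothesis is not a technicality here. First, the Fintushel--Stern product formula $SW_{X_K} = SW_X \cdot \Delta_K$ requires it (or a substitute). Second, and more fundamentally, your van Kampen claim breaks without it: when $\pi_1(X \setminus T) = \Z_p \oplus \Z_p$ is nontrivial, the gluing in $(S^3 \setminus N(K)) \times S^1$ sends the knot meridian $\mu_K$ to a push-off of a curve on $T$, which is generally a \emph{nontrivial} element of $\pi_1(X \setminus T)$; then the knot group does not collapse (one only kills the longitude, leaving the zero-surgery group $\pi_1(S^3_0(K))$ amalgamated in), and $\pi_1(X_{K_n})$ need not be $\Z_p \oplus \Z_p$. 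The collapse of the knot group in the standard argument uses that $\mu_K$ is glued to a nullhomotopic curve, which is exactly what a cusp neighborhood (vanishing cycles) provides and what you have not produced. Secondary gaps: ``knot surgery preserves minimality'' is not a theorem you can quote for the non-symplectic members, and to invoke Hambleton--Kreck you still need the bound $b_2 - |\sigma| > 2d(\pi) + 2$ together with the computation $d(\Z_p \oplus \Z_p) = 1$, neither of which you address.

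For contrast, the paper avoids all of this by not using knot surgery. It performs only one of the two Luttinger surgeries, obtaining a symplectic $X_0$ with $\pi_1 = \Z_p \oplus \Z$ and nontrivial Seiberg--Witten invariant (Taubes), and then produces the family $\{X_n\}$ by varying the coefficient of a $n/p$ torus surgery on the \emph{remaining} Lagrangian torus $T_1$ along $m_{T_1}$. All members are surgeries on a single nullhomologous torus $\Lambda$ (the core torus) in an auxiliary manifold $X$, so Corollary 2 of Fintushel--Park--Stern applies directly to show the $X_n$ are pairwise non-diffeomorphic and non-symplectic for $n \neq 1$, with $\pi_1$ computed from the surgery data rather than from a knot group. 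The homeomorphism statement is then Hambleton--Kreck's Theorem B for odd-order fundamental groups, which requires exactly the verification you skipped: $d(\pi) = 1$ for $\pi = \Z_p \oplus \Z_p$ and $b_2 - |\sigma| > 4$, which is where the hypothesis $n + m \geq 2$ enters. If you want to salvage the knot-surgery route, you would have to construct the required cusp-embedded torus in the manifolds of Theorem 1, which is a genuinely new geometric input not present in the paper's construction.
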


The characteristic numbers are given in terms of $\chi_h = 1/4(e + \sigma)$ and $c_1^2 = 2 e + 3 \sigma$, where $e$ is the Euler
characteristic of the manifold $X$ and $\sigma$ its signature.\\

The geography problem for abelian fundamental groups of small rank has already been previously studied with great success. In Gompf's
gorgeous paper \cite{[Go]} where the symplectic sum operation was introduced, infinitely
many minimal symplectic 4-manifolds with $b_2^+ > 1$ were constructed. Gompf also constructed a new family of symplectic spin
4-manifolds with any prescribed fundamental group. In \cite{[BK3]}, \cite{[BK4]} and \cite{[BK5]}, more and smaller symplectic manifolds were 
constructed.\\

Other construction techniques have also been implemented. For the group $\pi_1 = \Z \oplus \Z_p$, examples with big Euler characteristic where constructed using 
genus 2 Lefschetz fibrations in \cite{[OS]} and \cite{[IS]}. Results studying the symplectic geography for prescribed fundamental groups appeared in
\cite{[BK5]} and \cite{[BK3]}. Concerning the botany, J. Park in \cite{[JP]} constructed infinitely many smooth structures on big
4-manifolds with finitely generated fundamental group.\\

The addition of Luttinger surgery (cf. \cite{[Lu]}, \cite{[ADK]}) into the manufacturing procedure has provided clean constructions to study rather
effectively the geography of simply connected 4-manifolds (cf. \cite{[BK3]}, \cite{[ABBKP]}, \cite{[AP]}). On the botany part, the technique of doing
of using a nullhomologous torus as a dial in order to change the smooth structure developed in \cite{[FS]} and \cite{[FPS]} has proven succesful 
to study the botany. In this paper, we apply these efforts to manifolds with the three given fundamental groups.\\  

Our results provide manifolds with both $12\chi - c$ small and arbitrarily large. Most of the points filled in by Theorem 1 were not yet
considered elsewhere. For example, the point $(7, 1)$ corresponds to the 
smallest manifold built up to now. A blunt overlap occurs for the points $(6 + 8g, 1 + g)$, $(5, 1)$ and $(4,
1)$, which have been filled in already by constructions given in \cite{[BK3]} and \cite{[BK4]}; we are using their constructions to
build larger manifolds, thus filling in considerably many more points. The existence of at least two smooth structures on complex surfaces with finite non-cyclic fundamental groups was first studied in \cite{[HK93]}. Proposition 2 takes advantage of the
recent techniques and offers a myriad of new exotic irreducible 4-manifolds with finite abelian, yet non-cyclic fundamental group hosting infinitely many 
smooth structures; it includes the smallest manifold with such $\pi_1$ known to posses this quality.\\

The assumption $gcd(p, q) \neq 1$ serves the sole purpose of emphasizing that the results in this paper are disjoint from the cyclic case studied in
\cite{[To]}. We feel the results presented here deserve their own space and they should not be buried in a long paper for several reasons.
Amongst them is the employment of the homeomorphism criteria for finite groups of odd order (cf. \cite{[HK93]}) given in Section 6.3.\\




The blueprint of the paper is as follows. The geography is addressed first; Section 2 starts by describing the ingredients we will use to build the
manifolds of Theorem 1. The manufacturing procedure starts later
on in this section. The results that allow us to conclude irreducibility are presented in Section 3. The fourth section takes care of
the fundamental group calculations. The fifth section gathers up our efforts into the proof of Theorem 1.
The last part of the paper goes into the botany, where Section 6 takes on the existence of the exotic smooth structures claimed in Proposition 2.\\

\section{Raw Materials}


The following definition was introduced in \cite{[ABBKP]}.

\begin{definition} An ordered triple $(X, T_1, T_2)$ consisting of a symplectic 4-manifold $X$ and two disjointly 
embedded Lagrangian tori $T_1$ and $T_2$ is called a \emph{telescoping triple} if 
\begin{enumerate}
\item The tori $T_1$ and $T_2$ span a 2-dimensional subspace of $H_2(X; \R)$.
\item $\pi_1(X) \cong \Z^2$ and the inclusion induces an isomorphism $\pi_1(X - (T_1 \cup T_2)) \rightarrow \pi_1(X)$. In particular, the meridians 
of the tori are trivial in $\pi_1(X - (T_1 \cup T_2))$.
\item The image of the homomorphism induced by the corresponding inclusion $\pi_1(T_1) \rightarrow \pi_1(X)$ is a summand $\Z \subset \pi_1(X)$.
\item The homomorphism induced by inclusion $\pi_1(T_2) \rightarrow \pi_1(X)$ is an isomorphism.
\end{enumerate}

\end{definition}

The telescoping triple is called \emph{minimal} if $X$ itself is minimal. Notice the importance of the order 
of the tori. The meridians $\mu_{T_1}$, $\mu_{T_2}$ in $\pi_1(X - (T_1 \cup T_2))$ are trivial and the 
relevant fundamental groups are abelian. The push off of an oriented loop $\gamma \subset T_i$ into $X - (T_1 \cup T_2)$ with 
respect to any (Lagrangian) framing of the normal bundle of $T_i$ represents a well defined element of 
$\pi_1(X - (T_1 \cup T_2) )$ which is independent of the choices of framing and base-point.\\

The first condition assures us that the Lagrangian tori $T_1$ and $T_2$ are linearly independent in $H_2(X; \R)$. This allows 
for the symplectic form on $X$  to be slightly perturbed so that one of the $T_i$ remains Lagrangian while the other becomes 
symplectic. The symplectic form can also be perturbed in such way that both tori become symplectic. If we were to consider a symplectic 
surface $F$ in $X$ disjoint from $T_1$ and $T_2$, the perturbed symplectic form can be chosen so that $F$ remains symplectic.\\

Removing a surface from a 4-manifold usually introduces new generators into the fundamental group of the resulting manifold. 
The second condition indicates that the meridians are nullhomotopic in the complement and, thus, the fundamental group of the 
manifold and the fundamental group of the complement of the tori in the manifold coincide.\\

Out of two telescoping triples, one is able to produce another telescoping triple as follows. If both $X$ and $X'$ are symplectic manifolds, then the symplectic sum along the symplectic tori $X \#_{T_2, T_1'} X'$ has a
symplectic structure (\cite{[Go]}). If both $X$ and $X'$ are minimal, then the resulting telescoping triple
is minimal too (by Usher's theorem cf. \cite{[Ush]}). \\

\begin{proposition} (cf. \cite{[ABBKP]}). Let $(X, T_1, T_2)$ and $(X', T_1', T_2')$ be two telescoping triples. Then for an appropriate 
gluing map the triple

\begin{center}
$(X \#_{T_2, T_1'} X', T_1, T_2')$
\end{center}
is again a telescoping triple.\\
The Euler characteristic and the signature of $X \#_{T_2, T_1'} X'$ are given by $e(X) + e(X')$ and $\sigma(X) + \sigma(X')$.
\end{proposition}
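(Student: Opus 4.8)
The symplectic form on $Z := X \#_{T_2, T_1'} X'$ is supplied by Gompf's theorem and, in the minimal case, Usher's theorem, both already invoked above; the plan is therefore to verify the four defining conditions of a telescoping triple for the ordered triple $(Z, T_1, T_2')$ and then to record the additivity of $e$ and $\sigma$. Using the perturbation remarks preceding the proposition, I would first arrange the symplectic form on $X$ so that $T_2$ is symplectic while $T_1$ stays Lagrangian, and symmetrically on $X'$ so that $T_1'$ is symplectic while $T_2'$ stays Lagrangian; this both enables the symplectic sum along $T_2$ and $T_1'$ and guarantees that $T_1$ and $T_2'$ remain Lagrangian in $Z$. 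These two tori genuinely persist: since $T_1$ is disjoint from $T_2$ it lies in $X$ minus a tubular neighborhood of $T_2$, and likewise $T_2'$ lies in $X'$ minus a tubular neighborhood of $T_1'$, so both survive the excision-and-regluing that produces $Z$.

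Next I would compute $\pi_1(Z)$. Writing $Z$ as the union of $X$ minus a tubular neighborhood of $T_2$ and $X'$ minus a tubular neighborhood of $T_1'$, glued along their common $3$-torus boundary, van Kampen's theorem presents $\pi_1(Z)$ as an amalgam of $\pi_1(X \setminus T_2)$ and $\pi_1(X' \setminus T_1')$ over $\pi_1(T^3)$. Two preliminary reductions streamline this. Since filling back $T_1$ only adds the relation killing its meridian, the map $\pi_1(X \setminus (T_1 \cup T_2)) \to \pi_1(X \setminus T_2)$ is onto, and as its composite with $\pi_1(X \setminus T_2) \to \pi_1(X)$ is the isomorphism of condition (2), both maps are isomorphisms; hence $\pi_1(X \setminus T_2) \cong \pi_1(X) \cong \Z^2$, and symmetrically $\pi_1(X' \setminus T_1') \cong \pi_1(X') \cong \Z^2$. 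Because the meridians $\mu_{T_2}$ and $\mu_{T_1'}$ are nullhomotopic, the $S^1$-factor of $\pi_1(T^3)$ contributes no relation, so the amalgamation only identifies the image of $\pi_1(T_2)$ in $\pi_1(X)$ with the image of $\pi_1(T_1')$ in $\pi_1(X')$ under the gluing. By condition (4) for $(X, T_1, T_2)$ the former image is all of $\pi_1(X)$, so these relations express the two generators of $\pi_1(X)$ through elements of $\pi_1(X')$; the sole surviving commutator relation is automatic in the abelian group $\pi_1(X')$, and therefore $\pi_1(X') \to \pi_1(Z)$ is an isomorphism onto $\Z^2$. This settles the group statement of condition (2); the remaining part of (2)—that deleting $T_1$ and $T_2'$ from $Z$ leaves $\pi_1$ unchanged with both meridians trivial—follows by repeating this van Kampen computation with the full complements $X \setminus (T_1\cup T_2)$ and $X' \setminus (T_1' \cup T_2')$, each already $\cong \Z^2$, and observing that $\mu_{T_1}$ and $\mu_{T_2'}$ were nullhomotopic before gluing.

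Conditions (3) and (4) then follow from tracking images under $\pi_1(X') \xrightarrow{\cong} \pi_1(Z)$ and $\pi_1(X) \to \pi_1(Z)$. Condition (4) is immediate: the map $\pi_1(T_2') \to \pi_1(X')$ is an isomorphism by condition (4) for $(X', T_1', T_2')$, and composing with the isomorphism $\pi_1(X') \cong \pi_1(Z)$ keeps it one. Condition (3) is where the freedom in the gluing map enters, and I expect it to be the main obstacle. The image of $\pi_1(T_1)$ in $\pi_1(Z)$ lands inside the image of $\pi_1(X) \to \pi_1(Z)$, which is the rank-one subgroup $i'_*(\pi_1(T_1')) \subset \pi_1(X') = \pi_1(Z)$; since a rank-one subgroup of $\Z^2$ is a summand exactly when it is generated by a primitive element, one must choose the $GL(2,\Z)$-matching of a longitudinal basis of $T_2$ with one of $T_1'$ in the gluing diffeomorphism of $T^3$ so that the primitive generator of the $\Z$-summand $i_*(\pi_1(T_1)) \subset \pi_1(X)$ is carried to a primitive element of $\pi_1(Z)$ rather than into the kernel of $\pi_1(X) \to \pi_1(Z)$. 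Verifying that such a matching exists and is simultaneously compatible with the symplectic (Lagrangian-framing) gluing is the delicate point, and is precisely what the phrase \emph{appropriate gluing map} refers to.

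Finally, for condition (1) a Mayer--Vietoris computation shows that $[T_1]$ and $[T_2']$ remain linearly independent in $H_2(Z;\R)$: they descend from the independent pairs $[T_1],[T_2]$ in $H_2(X;\R)$ and $[T_1'],[T_2']$ in $H_2(X';\R)$, while the gluing only identifies the classes $[T_2]$ and $[T_1']$ that are being summed out. The characteristic numbers are routine: a fiber sum along a surface $F$ satisfies $e = e(X) + e(X') - 2e(F)$, and since $F$ is a torus with $e(T^2)=0$ this gives $e(Z) = e(X) + e(X')$; Novikov additivity across the separating $3$-torus, whose gluing carries no signature correction because the summing tori have self-intersection zero, yields $\sigma(Z) = \sigma(X) + \sigma(X')$.
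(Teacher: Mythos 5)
The paper itself contains no proof of this proposition: it is quoted from \cite{[ABBKP]}, and the reader is referred to theorems 20 and 13 and proposition 12 of \cite{[BK3]} for the argument. So there is no line-by-line comparison to make; what you have written is essentially the standard proof from those references, and it is sound. Your perturbation of the symplectic forms so that $T_2$ and $T_1'$ become symplectic while $T_1$ and $T_2'$ stay Lagrangian, the reduction $\pi_1(X - T_2) \cong \pi_1(X)$ (surjectivity of $\pi_1(X - (T_1\cup T_2)) \to \pi_1(X - T_2)$ composed with the isomorphism of condition (2)), the Seifert--Van Kampen computation in which both meridians die so that the amalgamation only identifies the push-off images of $\pi_1(T_2)$ with those of $\pi_1(T_1')$, the observation that the sole residual relation is a commutator that is vacuous in the abelian group $\pi_1(X')$, the Mayer--Vietoris argument for condition (1), and the additivity of $e$ and $\sigma$ via $e(T^2)=0$ and Novikov additivity along the separating $T^3$ are all correct and are exactly how the cited references proceed.

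The one place you stop short is the step you yourself flag: the existence of a fiber identification for which condition (3) holds. This is genuinely the crux of the phrase \emph{appropriate gluing map}, but it closes in a few lines, and you should close it. Since $\pi_1(T_2) \to \pi_1(X)$ is an isomorphism, the generator $t$ of the summand $i_*\pi_1(T_1) \subset \pi_1(X)$ corresponds to a primitive class $\gamma \in \pi_1(T_2)$. On the other side, the push-off homomorphism $\varphi\colon \pi_1(T_1') \cong \Z^2 \to \pi_1(X')$ has image the full summand $\Z u$ by condition (3) for $(X', T_1', T_2')$; writing $\varphi(e_1) = au$, $\varphi(e_2) = bu$ with $\gcd(a,b)=1$ and choosing $r,s$ with $ra+sb=1$, the class $\delta = re_1+se_2$ is primitive (any common divisor of $r,s$ divides $ra+sb=1$) and satisfies $\varphi(\delta)=u$. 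Now choose the identification $T_2 \to T_1'$ to be an orientation-preserving diffeomorphism carrying $\gamma$ to $\delta$, which exists because $SL(2,\Z)$ acts transitively on primitive classes; Gompf's construction places no restriction on this identification, since the symplectic-sum constraint concerns only the normal (meridional) direction. With this choice $t \mapsto u$ under $\pi_1(X) \to \pi_1(Z) \cong \pi_1(X')$, so $i_*\pi_1(T_1)$ is carried isomorphically onto the summand $\Z u$, which is condition (3). With that paragraph added, your proof is complete.
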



We refer the reader to theorems 20 and 13 and to proposition 12 in \cite{[BK3]} for the proof and for more details. The building blocks we
will used are gathered together in the following theorem. \\

\begin{theorem} 
\begin{itemize} 
\item There exists a minimal telescoping triple $(A, T_1, T_2)$ with $e(A) = 5$, $\sigma(A) = - 1$.
\item For each $g\geq 0$, there exists a minimal telescoping triple $(B_g, T_1, T_2)$ satisfying $e(B_g) = 6 +
4g$, $\sigma(B_g) = - 2$.
\item There exists a minimal telescoping triple $(C, T_1, T_2)$ with $e(C) = 7$, $\sigma(C) = - 3$.
\item There exists a minimal telescoping triple $(D, T_1, T_2)$ with $e(D) = 8$, $\sigma(D)
= - 4$.
\item There exists a minimal telescoping triple $(F, T_1, T_2)$ with $e(F) = 10$, $\sigma(F) = - 6$.\\ 
\end{itemize}
\end{theorem}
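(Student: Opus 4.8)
The plan is to realize each of the five manifolds as the end product of an explicit surgical construction and then to check the four conditions defining a telescoping triple one at a time. Each block $Y$ is assembled from standard pieces -- products $\Sigma_g \times T^2$ and $\Sigma_g \times \Sigma_h$ of surfaces, symplectically summed along tori or higher genus surfaces with an elliptic surface wherever a negative signature must be installed -- and is then subjected to a sequence of Luttinger surgeries along pairwise disjoint Lagrangian tori. Because a Luttinger surgery is performed on a Lagrangian torus of square zero and merely reglues a tubular neighborhood $T^2 \times D^2$ by a boundary twist, it preserves both the Euler characteristic and the signature and keeps the manifold symplectic. Consequently $(e,\sigma)$ is determined by the model before any surgery, and I would first verify that the five prescribed values $(e,\sigma)=(5,-1),(6+4g,-2),(7,-3),(8,-4),(10,-6)$ are attained. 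Translating through $\chi_h=\frac14(e+\sigma)$ and $c_1^2=2e+3\sigma$ gives $(c_1^2,\chi_h)=(7,1),(6+8g,1+g),(5,1),(4,1),(2,1)$ respectively; note that in every case $\pi_1 = \Z^2$ forces $b_1=2$, whence $b_2^+ = 2\chi_h+1\geq 3$ is odd, as a symplectic manifold requires.

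The heart of the argument, and the step I expect to be the main obstacle, is the fundamental-group computation. Fixing generators of $\pi_1(Y)$ coming from the surface factors, I would apply the Seifert--van Kampen theorem to record the effect of each Luttinger surgery: it adds exactly one relation setting a meridian of the surgered torus equal to a specified power of the push-off of a loop on that torus, the push-off taken with respect to the Lagrangian framing. The surgeries are to be chosen so that, once all are performed, these relations collapse the large group $\pi_1(Y)$ down to $\Z^2$; this is the first half of condition (2). The delicate point is that the push-offs are framing- and base-point-dependent loops, so one must keep scrupulous track of conjugations and commutators while checking that the accumulated relations simultaneously abelianize the surface-generated subgroup and kill the meridians of the two distinguished tori $T_1, T_2$ that survive into $X$. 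Once $\mu_{T_1}, \mu_{T_2}$ are shown to be nullhomotopic in $X-(T_1\cup T_2)$, the inclusion introduces no new generators and the induced map $\pi_1(X-(T_1\cup T_2))\to\pi_1(X)$ is an isomorphism, completing condition (2).

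With $\pi_1(X)\cong\Z^2$ established, the remaining conditions are read off from the same presentation. For condition (1) I would compute $[T_1]$ and $[T_2]$ in $H_2(X;\R)$ from the homology of the model, using that the two tori descend from independent product classes and survive the surgeries, so that they span a two-dimensional subspace. For conditions (3) and (4) I would identify the images of the inclusion-induced maps $\pi_1(T_1)\to\Z^2$ and $\pi_1(T_2)\to\Z^2$ as explicit subgroups: the construction is arranged so that the two generating loops of $T_2$ map to a basis of $\Z^2$, making $\pi_1(T_2)\to\pi_1(X)$ an isomorphism, while one generating loop of $T_1$ maps to a primitive element and the other dies, so that the image of $\pi_1(T_1)$ is a $\Z$-summand. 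This is precisely where the ordering of the pair $(T_1,T_2)$ is essential, and where the freedom to choose which loops push off to generators is used.

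Finally, minimality. Since $b_2^+\geq 3>1$ for every block, Seiberg--Witten theory applies with no chamber ambiguity, and I would conclude that $X$ is minimal: this follows from Usher's theorem \cite{[Ush]} when the model is presented as a relatively minimal symplectic sum along a surface of positive genus, and from a Taubes-type obstruction to symplectic spheres of square $-1$, together with the control on the basic classes afforded by Luttinger surgery, otherwise. The explicit models, the surgery curves, and these verifications are the content of Theorems 13 and 20 and Proposition 12 of \cite{[BK3]}; the role here is to assemble them and confirm that each resulting pair of Lagrangian tori meets the four conditions above, so that Proposition 3 may be iterated to produce the manifolds of Theorem 1.
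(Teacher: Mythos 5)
Your overall framework is the right one and matches the paper's in spirit: assemble symplectic blocks, use Luttinger surgeries and Seifert--van Kampen bookkeeping to collapse $\pi_1$ to $\Z^2$ while keeping the meridians of two surviving Lagrangian tori trivial, read conditions (3) and (4) off the push-offs, and get minimality from Usher. Your numerics are also correct: $(c_1^2,\chi_h)=(7,1),(6+8g,1+g),(5,1),(4,1),(2,1)$ and $b_2^+=2\chi_h+1$. But there is a genuine gap at the center of the argument: the five blocks are never actually constructed, and the citation you lean on does not contain them. The results of \cite{[BK3]} you invoke (Theorems 13 and 20, Proposition 12) are what the paper uses to justify the \emph{gluing} statement for telescoping triples (Proposition 4 here), not the existence of the blocks. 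Of the five, only $B_g$, $D$ and $F$ can be quoted from the literature, namely from \cite{[ABBKP]}, where they are extracted from the constructions of \cite{[BK3]}. The two smallest triples, $A$ with $(e,\sigma)=(5,-1)$ and $C$ with $(e,\sigma)=(7,-3)$, are new; producing them is the actual content of the paper's proof (its Lemmas 6 and 7). The mechanism is specific: re-run the Akhmedov--Park constructions \cite{[AP]} of exotic $\mathbb{CP}^2\#2\overline{\mathbb{CP}}^2$ and $\mathbb{CP}^2\#4\overline{\mathbb{CP}}^2$, but deliberately \emph{skip} two carefully chosen Luttinger surgeries; the two unused Lagrangian tori become $T_1$ and $T_2$, and one then verifies by hand that their meridians die in the complement, that one Lagrangian push-off of $T_1$ generates a $\Z$-summand while the other is killed, and that the push-offs of $T_2$ give a basis of $\Z^2$. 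This selection of \emph{which} two surgeries to omit is exactly the step your plan leaves open with ``the surgeries are to be chosen so that.''

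There is also a concrete obstruction showing your declared raw materials cannot work as stated. Signature is additive under symplectic sum along square-zero surfaces (Novikov additivity, as recorded in Proposition 4 of the paper) and is unchanged by Luttinger surgery; products of surfaces have $\sigma=0$ and elliptic surfaces $E(n)$ have $\sigma=-8n$. Hence a toolkit consisting of surface products ``with an elliptic surface wherever a negative signature must be installed'' only produces signatures divisible by $8$, and can reach none of the required values $\sigma=-1,-2,-3,-4,-6$. The constructions actually used install signature by blow-ups inside the summands: $A$ arises from the sum of (a surgered) $\Sigma_2\times T^2$ with $T^4\#\overline{\mathbb{CP}}^2$ along genus-2 surfaces, and $C$ from blocks of the form $T^4\#2\overline{\mathbb{CP}}^2$ and $T^4\#\overline{\mathbb{CP}}^2$. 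So two ideas are missing: the correct building blocks (blown-up $4$-tori rather than elliptic surfaces), and the interruption mechanism that converts an existing simply connected construction into a telescoping triple.
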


The manifolds $B_g$, $D$ and $F$ were already built in \cite{[ABBKP]}. They are taken out of the
constructions given in \cite{[BK3]} by the following mechanism. The main goal of \cite{[BK3]} is to construct simply
connected 4-manifolds by applying Luttinger surgery to symplectic sums. If one is careful about the fundamental
fundamental group calculations, the procedure can be interrumpted by NOT performing two surgeries, and thus obtain a
symplectic manifold with $\pi_1 = \Z \oplus \Z$. Furthermore, the skipped surgeries have to be chosen carefully 
so that the unused Lagrangian tori comply with the requirements and the pieces can then be aligned into a telescoping
triple.\\

To finish the proof of Theorem 5, we construct $(A, T_1, T_2)$ and $(C, T_1, T_2)$ by applying this mechanism to the
constructions in \cite{[AP]}. This is done in the following two lemmas, where we follow the notation of \cite{[AP]}.\\

\begin{lemma} There exists a telescoping triple $(A, T_1, T_2)$ with $e(C) = 5$ and $\sigma(C) = -1$.\\
\end{lemma}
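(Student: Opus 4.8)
The plan is to construct the manifold $A$ as a Luttinger-surgered symplectic sum, following the mechanism described above for extracting telescoping triples from the simply-connected constructions of \cite{[AP]}. Since we need $e(A) = 5$ and $\sigma(A) = -1$, we are targeting $\chi_h = 1$ and $c_1^2 = 7$, i.e.\ the characteristic numbers of a rational-type building block; the natural source is the construction in \cite{[AP]} that produces a simply connected minimal symplectic manifold with these invariants by performing a prescribed collection of Luttinger surgeries on a symplectic sum of standard pieces (elliptic surfaces, products of surfaces, and the like). The first step is therefore to recall the precise symplectic sum and the full list of Luttinger surgeries used in \cite{[AP]}, keeping careful track of the Lagrangian tori involved and the loops they carry in the fundamental group.

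Next I would interrupt the procedure by declining to perform exactly two of the scheduled Luttinger surgeries, as in the treatment of $B_g$, $D$ and $F$. Omitting two surgeries leaves two relations unkilled in the fundamental group, and the goal is to arrange that the surviving group is precisely $\Z \oplus \Z \cong \Z^2$; this requires choosing the two skipped surgeries so that their associated loops generate the residual $\Z^2$ and so that every other generator of $\pi_1$ of the glued-up manifold is killed by the surgeries we do perform together with the meridian relations. The two Lagrangian tori left untouched by the omitted surgeries become the candidates $T_1$ and $T_2$ of the triple. I would then verify the four conditions of Definition~3 in turn: linear independence of $T_1, T_2$ in $H_2(A;\R)$ (condition (1)), triviality of the meridians together with the inclusion-induced isomorphism $\pi_1(A - (T_1 \cup T_2)) \to \pi_1(A)$ (condition (2)), that $\pi_1(T_1) \to \pi_1(A)$ has image a $\Z$-summand (condition (3)), and that $\pi_1(T_2) \to \pi_1(A)$ is an isomorphism onto $\Z^2$ (condition (4)). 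The Euler characteristic and signature then follow from the additivity in Proposition~4 applied to the underlying sum, adjusted by the fact that Luttinger surgery preserves both $e$ and $\sigma$.

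The main obstacle I expect is the fundamental group bookkeeping in condition (2) and the asymmetric conditions (3) and (4): it is delicate to show that the complement $A - (T_1 \cup T_2)$ has the \emph{same} fundamental group as $A$ (so that the meridians die) while simultaneously arranging that $T_1$ contributes only a $\Z$-summand whereas $T_2$ surjects isomorphically onto the full $\Z^2$. These are exactly the constraints that dictate which two surgeries may be skipped, and getting them to hold simultaneously will require a van Kampen computation tracing each generator and relation through the symplectic sum and the surgeries, using the fact (noted in \cite{[AP]}) that the meridians of the summed tori are nullhomotopic in the pieces being glued. Minimality of $A$ will follow from Usher's theorem \cite{[Ush]}, since the pieces entering the sum are minimal and the surgeries are performed along Lagrangian tori; so I anticipate that once the group calculation is pinned down, the geometric and numerical claims fall out routinely.
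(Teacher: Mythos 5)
Your strategy coincides with the paper's: take the construction of the exotic symplectic $\mathbb{CP}^2 \# 2\overline{\mathbb{CP}}^2$ from \cite{[AP]} (you correctly pin down the target invariants $c_1^2 = 7$, $\chi_h = 1$), decline to perform two of the scheduled Luttinger surgeries, let the two untouched Lagrangian tori be $T_1$ and $T_2$, and verify the four conditions of the telescoping-triple definition, with minimality coming from the ambient construction. So the approach is not the issue.

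The genuine gap is that you never do the one thing the lemma actually requires: exhibit a specific pair of surgeries whose omission works, and carry out the fundamental group computation that certifies it. You write that the choice of skipped surgeries is ``dictated by the constraints'' and that verifying conditions (2)--(4) ``will require a van Kampen computation,'' but existence of a valid choice is precisely what is being claimed---it is not automatic that \emph{any} two omitted surgeries leave a residual $\Z^2$ with trivial meridians and with the asymmetric push-off conditions (a $\Z$-summand from $T_1$, an isomorphism from $T_2$). The paper commits to the surgeries $(a_2' \times c', c', +1/p)$ and $(b_1'\times c'', b_1', -1)$, then traces the remaining relations to show every generator except $b_1$ and $c$ dies (e.g.\ $a_1 = [b_1^{-1},[c^{-1},b_2]] = 1$, whence $a_2 = b_2 = d = 1$), checks $\mu_{T_1} = \mu_{T_2} = 1$, identifies the push-offs $m_{T_1} = c$, $l_{T_1} = 1$, $m_{T_2} = c$, $l_{T_2} = b_1$, and uses a Mayer--Vietoris argument to pass from ``generated by two commuting elements'' to $\pi_1(A - (T_1 \cup T_2)) \cong \Z b_1 \oplus \Z c$ (surjecting a group onto $\Z^2$ by commuting generators does not alone give injectivity). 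Without naming the tori and performing this bookkeeping, your proposal is a plan that presupposes its own success rather than a proof; everything you list after that point (numerical invariants, Usher minimality) is indeed routine, but the core step is missing.
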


\begin{proof} This telescoping triple is obtained out of the construction of an exotic irreducible symplectic $\mathbb{CP}^2 \# 2\overline{\mathbb{CP}}^2$ given in \cite{[AP]}. 
The two surgeries to be skipped are $(a_2' \times c', c', +1/p)$ and $(b_1'\times c'', b_1', -1)$ (the notation is explained in \cite{[FS]}). Rename the corresponding tori $T_1$ and $T_2$. This procedure manufactures a minimal symplectic
manifold $A$. Notice that the tori are linearly independent in $H_2(A; \R)$. We need to check that such manifold has indeed $\pi_1 = \Z^2$ and that it contains
the required tori.\\

Let us begin with the fundamental group calculations. By combining the relations coming from the surgeries $(a_1'\times c',
a_1', -1)$ and $(a_2'' \times d', d', +1)$ that where performed on the $\Sigma_2 \times T^2$ block (see \cite{[AP]} for details) we have
$\alpha_1 = a_1 = [b_1^{-1}, d^{-1}] = [b_1^{-1}, [b_2, c^{-1}]^{-1}] = [b_1^{-1}, [c^{-1}, b_2]] = 1$. The last commutator is trivial
since $b_1$ commutes with both $c_1$ and $b_2$. Substituting this in the relations coming from the surgeries applied to the
building block $T^4\# \overline{\mathbb{CP}}^2$, we obtain $\alpha_3 = a_2 = 1$ and $\alpha_4 = b_2 = 1$. By looking at the
relations from the other building block we see $d = 1$. Note that the meridians of the surfaces along which the gluing is
performed are trivial. Thus only two commuting generators survive in the group presentation.\\

We check that the meridian of the first torus is $\mu_{T_1} = [d^{-1}, b_2^{-1}] = 1$ and its Lagrangian push-offs are $m_{T_1} = c$ and $l_{T_1} =
a_2 = 1$. For the torus $T_2$ one sees $\mu_{T_2} = [a_1^{-1}, d] = 1$ and its Lagrangian push-offs are $m_{T_2} = c$ and $l_{T_2} = b_1$.
So, $\pi_1(A - (T_1 \cup T_2))$ is generated by the commuting elements $b_1$ and $c$. By a Mayer-Vietoris sequence we see $H_1(A - (T_1 \cup T_2)) =
\Z^2$. Thus $\pi_1(A - (T_1\cup T_2) = \Z b_1\oplus \Z c$. We conclude $(A, T_1, T_2)$ is a telescoping triple.\\
\end{proof}

\begin{lemma} There exists a telescoping triple $(C, T_1, T_2)$ with $e(C) = 7$ and $\sigma(C) = -3$.
\end{lemma}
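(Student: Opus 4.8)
The plan is to produce the telescoping triple $(C, T_1, T_2)$ by exactly the same mechanism used in Lemma 6, now applied to whichever construction in \cite{[AP]} yields an exotic manifold with $e = 7$ and $\sigma = -3$ (i.e. a manifold homeomorphic to $\mathbb{CP}^2 \# 4\overline{\mathbb{CP}}^2$, since $\chi_h = 1$ and $c_1^2 = 2e + 3\sigma = 14 - 9 = 5$). That target construction is itself assembled from standard building blocks, such as $\Sigma_2 \times T^2$ and copies of $T^4 \# \overline{\mathbb{CP}}^2$, by symplectic summing and then applying a prescribed list of Luttinger surgeries. The idea is to interrupt the procedure by \emph{not} performing two of those surgeries, and to select the two skipped surgeries so that the two unused Lagrangian tori can be renamed $T_1$ and $T_2$ and will satisfy the four defining conditions of a telescoping triple.

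First I would record the full list of Luttinger surgeries in the \cite{[AP]} construction together with their effect on $\pi_1$, following the notation of \cite{[FS]} and \cite{[AP]} as in the previous lemma. Then I would pick the two surgeries to omit, choosing them so that the remaining surgeries still collapse the fundamental group down to a rank-two abelian group. By Usher's theorem the resulting symplectic manifold $C$ is minimal, and by construction $e(C) = 7$, $\sigma(C) = -3$. Next I would carry out the fundamental group computation for the closed-up manifold: substituting the surviving surgery relations, the various generators coming from the building blocks must be killed or identified until precisely two commuting generators remain, giving $\pi_1(C) \cong \Z^2$. This is the direct analogue of the commutator-chasing calculation in Lemma 6.

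The heart of the verification is then the complement $C - (T_1 \cup T_2)$. I would exhibit the meridians $\mu_{T_1}, \mu_{T_2}$ as commutators of generators that have already been trivialized, so that each meridian is nullhomotopic in the complement; this gives condition (2), namely that the inclusion induces $\pi_1(C - (T_1 \cup T_2)) \xrightarrow{\sim} \pi_1(C) \cong \Z^2$. I would then compute the Lagrangian push-offs $m_{T_i}, l_{T_i}$ of $T_1$ and $T_2$ explicitly and read off the images of $\pi_1(T_1) \to \pi_1(C)$ and $\pi_1(T_2) \to \pi_1(C)$, checking that the first is a rank-one summand $\Z \subset \Z^2$ (condition (3)) while the second is an isomorphism onto $\Z^2$ (condition (4)). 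A Mayer--Vietoris argument, exactly as before, confirms $H_1(C - (T_1 \cup T_2)) = \Z^2$ and pins down the group on the nose. Condition (1), linear independence of $T_1$ and $T_2$ in $H_2(C; \R)$, is immediate since the two tori descend from independent classes in the building blocks.

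The main obstacle is the same delicate bookkeeping that makes Lemma 6 nontrivial: the two skipped surgeries must be chosen so that two competing requirements are met simultaneously. On one hand, enough relations must survive to force $\pi_1(C) = \Z^2$ and to render both meridians trivial in the complement; on the other hand, the two surviving Lagrangian tori must be positioned so that their induced maps on $\pi_1$ realize precisely a summand and an isomorphism, respectively, and in the correct \emph{order}. Since the telescoping-triple axioms are not symmetric in $T_1$ and $T_2$, the verification of conditions (3) and (4) is where care is required; I expect the push-off computation identifying $l_{T_1}$, $m_{T_1}$, $l_{T_2}$, $m_{T_2}$ to be the step most likely to need a careful choice of framings and base-point, just as in the preceding lemma.
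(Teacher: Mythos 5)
Your proposal follows essentially the same route as the paper: the paper's proof interrupts the construction of the exotic symplectic $\mathbb{CP}^2 \# 4\overline{\mathbb{CP}}^2$ from \cite{[AP]} by skipping the two surgeries $(\alpha'_2 \times \alpha_3'', \alpha_2', -1)$ and $(\alpha_2'' \times \alpha_4', \alpha_4', -1)$, renaming the unused Lagrangian tori $T_2$ and $T_1$, and then verifies the telescoping-triple conditions exactly as you outline --- commutator chasing to get $\pi_1(C) \cong \Z^2$, triviality of the meridians, explicit push-offs $m_{T_1} = \alpha_2$, $l_{T_1} = 1$, $m_{T_2} = \alpha_4$, $l_{T_2} = \alpha_2$, and a Mayer--Vietoris computation of $H_1(C - (T_1 \cup T_2))$. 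What remains to turn your outline into the paper's proof is only the concrete bookkeeping you correctly anticipated: the specific choice of the two skipped surgeries and the resulting relations.
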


\begin{proof} We follow the construction of an exotic irreducible symplectic $\mathbb{CP}^2 \# 4\overline{\mathbb{CP}}^2$ given in \cite{[AP]}. The
surgeries $(\alpha'_2 \times \alpha_3'', \alpha_2', -1)$ in the $T^4 \# 2
\overline{\mathbb{CP}}^2$ block and $(\alpha_2'' \times \alpha_4', \alpha_4', -1)$ in the $T^4 \# 
\overline{\mathbb{CP}}^2$ block will NOT be performed. Call these tori $T_2$ and $T_1$ respectively and the resulting manifold $C$. 
Notice that they are linearly independent in $H_2(C; \R)$.\\

We apply $(\alpha'_1 \times \alpha_3', \alpha_1', -1)$ on the $T^4 \# 2\overline{\mathbb{CP}}^2$. This 
introduces the relation $\alpha_1 = [\alpha_2^{-1}, \alpha_4^{-1}]$. Using the commutator $[\alpha_2, \alpha_4] = 1$, one sees $\alpha_1 = 1$. The
relation $\alpha_3 = [\alpha_1^{-1}, \alpha_4^{-1}]$ obtained by applying a Luttinger surgery on the $T^4\# \overline{\mathbb{CP}}^2$ building block
implies $\alpha_3 = 1$. The surfaces of genus 2 along which the symplectic sum is performed have trivial meridians.\\

The meridian of $T_1$ is $\mu_{T_1} = [a_1^{-1}, \alpha_4] = 1$ and its Lagrangian push-offs are $m_{T_1} = \alpha_2$ and $l_{T_1} = \alpha_3 = 1$. 
The meridian of $T_2$ is given by $\mu_{T_2} = [\alpha_1, \alpha_3^{-1}] = 1$ and its Lagrangian push-offs are $m_{T_2} = \alpha_4$ and $l_{T_2} =
\alpha_2$. We have that $\pi_1(C - (T_1 \cup T_2))$ is generated by the commuting elements $\alpha_2$ and $\alpha_4$. The Mayer-Vietoris
sequence computes $H_1(C - (T_1 \cup T_2)) = \Z^2$, thus $\pi_1(C - (T_1 \cup T_2)) = \Z\alpha_2 \oplus \Z\alpha_4$. Thus, $(C, T_1, T_2)$ is a telescoping triple.
\end{proof}

\begin{remark} One is able to realize the point $(c^2_1, \chi_h) = (3, 1)$ for the fundamental groups $\pi_1 = \Z^2$ and $\pi_1 = \Z$ during the manufacturing process of an exotic irreducible symplectic $\mathbb{CP}^2
\# 6\overline{\mathbb{CP}}^2$. Consider the symplectic sum of $T^4 \# \overline{\mathbb{CP}}^2$ and $T^2 \times S^2 \# 4\overline{\mathbb{CP}}^2$ along
a genus 2 surface given in \cite{[AP]}. The resulting minimal symplectic 4-manifold has a fundamental group with the following presentation

\begin{center}
$<\alpha_1, \alpha_2, \alpha_3| [\alpha_1, \alpha_2] = 1, [\alpha_2, \alpha_3] = 1, 
\alpha_1^{-1} = \alpha_3^2> \cong \Z \oplus \Z$.\\
\end{center}

If we apply the surgery $(\alpha_2'' \times \alpha_4', \alpha_4', -1)$, the relation $\alpha_4 = [\alpha_1, \alpha_3^{-1}]$ is introduced 
to the fundamental group presentation and we obtain a manifold with
fundamental group

\begin{center}
$\pi_1 = <\alpha_1, \alpha_3| \alpha_1^{-1} = \alpha_3^2> \cong \Z$.\\
\end{center}

If we apply the surgery $(\alpha_2' \times \alpha_3', \alpha_3', -1)$, the relation $\alpha_3 = [\alpha_1^{-1}, \alpha_4^{-1}]$ is 
introduced to the fundamental group presentation and we obtain a manifold with fundamental group $\pi_1 = <\alpha_2> = \Z$.\\

\end{remark}

One can go on and build more telescoping triples out of these five by using Proposition 4. We proceed to do so now. Let us start by
setting some useful notation. Let $(X, T_1, T_2)$ be a telescoping triple. We will denote by $X_n: = \# n(X)$ the manifold obtained by
building the symplectic sum (cf. \cite{[Go]}) of $n$ copies of $X$ along the proper tori.

\begin{proposition} For each $n\geq 1$ and $m\geq 1$, the following minimal telescoping triples with the given Characteristic numbers exist:
\begin{enumerate}
\item $(A_n, T_1, T_2)$ satisfying $e(A_n) = 5 n$ and $\sigma(A_n) = -n$.
\item $(C_n, T_1, T_2)$ satisfying $e(C_n) = 7 n$ and $\sigma(C_n) = -3 n$.
\item $(D_n, T_1, T_2)$ satisfying $e(D_n) = 8 n$ and $\sigma(D_n) = -4 n$.
\item $(F_n, T_1, T_2)$ satisfying $e(F_n) = 10 n$ and $\sigma(F_n) = -6 n$.
\item $(\# n (B_g), T_1, T_2)$ satisfying $e(\# n (B_g)) = (6 + 4g)n$ and $\sigma(\# n(B_g)) = -2 n$.

\item $(A_n \# m(B_g), T_1, T_2)$ satisfying $e(A_n \# m(B_g)) = 5 n + (6 + 4g)m$ and 
$\sigma(A_n \# m(B_g)) = - n  - 2m$.

\item $(A_n \# C_m, T_1, T_2)$ satisfying $e(A_n \# C_m) = 5n + 7m$ and $\sigma(A_n \# C_m) = -n - 3m$.
\item $(A_n \# D_m, T_1, T_2)$ satisfying $e(A_n \# D_m) = 5n + 8m$ and $\sigma(A_n \# D_m) = -n - 4m$.
\item $(A_n \# F_m, T_1, T_2)$ satisfying $e(A_n \# F_m) = 5n + 10m$ and $\sigma(A_n \# F_m) = -n - 6m$.

\item $(\# n (B_g) \# C_m, T_1, T_2)$ satisfying $e(\# n (B_g) \# C_m) = (6 + 4g)n + 7m$ 
and $\sigma(\# n (B_g) \# C_m) = -2n - 3m$.

\item $(\# n(B_g) \# D_m, T_1, T_2)$ satisfying $e(\# n(B_g) \# D_m) =  (6 + 4g) n + 8 m$ and
$\sigma(n(B_g) \# D_m) = -2 n - 4m$.


\item $(\# n(B_g) \# F_m, T_1, T_2)$ satisfying $e(\# n(B_g) \# F_m) =  (6 + 4g) n + 10 m$ and
$\sigma(n(B_g) \# F_m) = -2 n - 6m$.

\item $(C_n \# D_m, T_1, T_2)$ satisfying $e(C_n \# D_m) =  7 n + 8 m$ and 
$\sigma(C_n \# D_m) = -3 n - 4m$.


\item $(C_n \# F_m, T_1, T_2)$ satisfying $e(C_n \# F_m) =  7 n + 10 m$ and 
$\sigma(C_n \# F_m) = -3 n - 6m$.


\item $(D_n \# F_m, T_1, T_2)$ satisfying $e(D_n \# F_m) =  8 n + 10 m$ and 
$\sigma(D_n \# F_m) = -4 n - 6m$.


\end{enumerate}
\end{proposition}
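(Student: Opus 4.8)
The plan is to prove Proposition 8 by iterating Proposition 4 (the gluing proposition for telescoping triples), using the five building blocks supplied by Theorem 5 together with the additivity of the Euler characteristic and signature under symplectic sum. The key observation is that every item in the list is constructed by repeatedly forming the symplectic sum $X \#_{T_2, T_1'} X'$ of telescoping triples along the paired tori, and that Proposition 4 guarantees the result is again a telescoping triple (hence that conditions (1)--(4) of Definition 3 are automatically inherited), while Usher's theorem preserves minimality since every building block $A, B_g, C, D, F$ is minimal. Thus the entire content reduces to a bookkeeping argument on the two characteristic numbers.

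\begin{proof}
First I would fix the notation $X_n := \#\,n(X)$ for the iterated symplectic sum of $n$ copies of a telescoping triple $(X, T_1, T_2)$ along the proper tori, as introduced above. By Proposition 4, each such sum is again a telescoping triple, and its Euler characteristic and signature add: $e(X \#_{T_2, T_1'} X') = e(X) + e(X')$ and $\sigma(X \#_{T_2, T_1'} X') = \sigma(X) + \sigma(X')$. Since the operation is associative up to the choice of gluing maps, iterating it $k$ times yields
\begin{equation*}
e\bigl(\#\,k(X)\bigr) = k\,e(X), \qquad \sigma\bigl(\#\,k(X)\bigr) = k\,\sigma(X).
\end{equation*}
Applying this to each of the five blocks of Theorem 5 establishes items (1)--(5): for instance $(C_n, T_1, T_2)$ has $e = 7n$ and $\sigma = -3n$ because $e(C) = 7$ and $\sigma(C) = -3$, and similarly for $A_n$, $D_n$, $F_n$ and $\#\,n(B_g)$ where $e(B_g) = 6 + 4g$, $\sigma(B_g) = -2$.
\end{proof}

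The remaining items (6)--(15) are the mixed sums, and here I would simply combine two of the already-constructed telescoping triples via one more application of Proposition 4. Because $e$ and $\sigma$ are additive, the characteristic numbers are read off immediately: e.g. for $(A_n \# C_m, T_1, T_2)$ one has $e = e(A_n) + e(C_m) = 5n + 7m$ and $\sigma = \sigma(A_n) + \sigma(C_m) = -n - 3m$, and every other mixed entry follows the same pattern by adding the corresponding values. Minimality throughout is inherited from the minimality of the blocks by Usher's theorem, as noted in the discussion preceding Proposition 4.

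I expect the only genuine subtlety — rather than a true obstacle — to be verifying that at each stage the tori being summed are the correctly ordered pair so that the hypotheses of Proposition 4 are met, since the order of $T_1$ and $T_2$ matters in Definition 3. One must check that the outgoing torus $T_2$ of the left factor is glued to the incoming torus $T_1'$ of the right factor, and that the resulting triple retains $T_1$ from the first block and $T_2'$ from the last; this is exactly what Proposition 4 certifies for a single gluing, so the induction goes through cleanly. Everything else is the routine additive bookkeeping of Euler characteristics and signatures, which I would record in a single table rather than deriving each of the fifteen lines separately.
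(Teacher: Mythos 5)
Your proposal matches the paper's own argument essentially verbatim: the paper likewise builds all fifteen triples by iterating Proposition 4 on the five blocks of Theorem 5, reads off the characteristic numbers from the additivity of $e$ and $\sigma$ under symplectic sum, and defers minimality to Usher's theorem (stated in Section 3), exactly as you do. Your remark about tracking the ordered pairing of tori $T_2$ with $T_1'$ is the same point the paper flags when it stresses ``the importance of the order of the tori,'' so there is no gap and nothing genuinely different in your route.
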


The claim about minimality is proven in the next section.\\

\section{Minimality and Irreducibility}

The following result allows us to conclude the irreducibility of the manufactured minimal 4-manifolds.\\

\begin{theorem} (Hamilton and Kotschick, \cite{[HKo]}). Minimal symplectic 4-manifolds with residually finite fundamental groups are irreducible.\\
\end{theorem}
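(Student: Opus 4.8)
The plan is to prove the contrapositive in spirit: show that a minimal symplectic $4$-manifold $X$ with residually finite $\pi_1$ admits no nontrivial smooth connected sum decomposition, i.e. that if $X = X_1 \# X_2$ smoothly, then one of the $X_i$ is homeomorphic to $S^4$. The driving tool is gauge theory. First I would record that a closed symplectic $4$-manifold carries a nonvanishing Seiberg--Witten invariant: by Taubes' theorem the canonical $\mathrm{spin}^c$ structure has $SW_X = \pm 1$ whenever $b^+(X) > 1$. The case $b^+(X) = 1$ must be disposed of separately, using the chamber structure and wall-crossing formula for $SW$ when $b^+ = 1$ together with the structure theory of minimal symplectic $4$-manifolds with $b^+ = 1$ (rational, ruled, or of nonnegative Kodaira dimension); I would handle it first and assume $b^+(X) > 1$ thereafter.

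Next I would invoke the connected sum vanishing theorem for Seiberg--Witten invariants: if $X = X_1 \# X_2$ with $b^+(X_1) > 0$ \emph{and} $b^+(X_2) > 0$, then $SW_X \equiv 0$. Since $SW_X \neq 0$, one summand, say $X_2$, must satisfy $b^+(X_2) = 0$. Because the intersection form of a closed oriented $4$-manifold is unimodular, this forces $X_2$ to be either negative definite (if $b_2(X_2) > 0$) or a rational homology $4$-sphere (if $b_2(X_2) = 0$). The decisive remaining task is to show that $X_2$ is in fact a homotopy sphere, whence $X_2 \cong S^4$ by Freedman and the decomposition is trivial, proving irreducibility.

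To eliminate the possibility $b_2(X_2) > 0$ I would exploit minimality through Seiberg--Witten theory: for a minimal symplectic $4$-manifold with $b^+ > 1$, Taubes' results pin down the basic classes as essentially $\pm K_X$ and yield $c_1^2(X) = (2\chi + 3\sigma)(X) \geq 0$, while a negative definite summand contributes a characteristic vector of its form, which by the Donaldson/Fr{\o}yshov diagonalization theorem is $\langle -1 \rangle^{\oplus b_2(X_2)}$. The interplay of the connected-sum gluing formula with these constraints, together with the fact that for $b^+ > 1$ symplectic and smooth minimality coincide, yields a contradiction unless the summand is trivial. Residual finiteness is exactly what makes these gauge-theoretic inputs applicable when $\pi_1(X_2)$ (the free factor in $\pi_1(X) \cong \pi_1(X_1) * \pi_1(X_2)$) is nontrivial: since a nontrivial free product is infinite, the universal cover is noncompact and gauge theory does not apply to it directly, so instead I would pass to finite-index covers $\hat X \to X$ supplied by residual finiteness. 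These inherit a symplectic structure, the decomposition lifts to a finite connected sum of covers of the pieces, and by multiplicativity of $e$ and $\sigma$ under covers the non-simply-connected definite (or rational homology sphere) summand is reduced to a simpler, controlled one to which diagonalization and the basic-class analysis apply, forcing $b_2(X_2) = 0$ and $\pi_1(X_2) = 1$.

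I expect the main obstacle to be precisely this control of the $b^+ = 0$ summand when its fundamental group is nontrivial. The reduction to a single negative definite (or rational homology sphere) summand is a clean application of Taubes' nonvanishing and the connected-sum vanishing theorem, but excluding that summand is where the hypotheses genuinely bite: one must leverage residual finiteness to climb to compact finite covers on which Donaldson-type diagonalization is available and on which the minimality of $X$ propagates into the Seiberg--Witten constraints. The $b^+ = 1$ regime is a secondary technical point, requiring the classification of such minimal symplectic manifolds and the wall-crossing machinery rather than the generic nonvanishing of $SW$.
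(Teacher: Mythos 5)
First, a point of context: the paper under review does not prove this statement at all --- it is quoted from Hamilton--Kotschick \cite{[HKo]} and used as a black box --- so your proposal must be judged against the proof in that reference. Its architecture is in fact the same as yours: Taubes' nonvanishing of Seiberg--Witten invariants for symplectic manifolds, the connected-sum vanishing theorem to force one summand $X_2$ to have $b^+=0$, Donaldson diagonalization together with basic-class/simple-type constraints to exclude $b_2(X_2)>0$, and --- exactly as you say --- residual finiteness used to pass to finite covers in order to exclude a homology-sphere summand with nontrivial fundamental group, with Freedman's theorem closing the argument. You have identified the correct strategy and, notably, the correct function of the residual finiteness hypothesis.

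The genuine gap is the step you phrase as ``the minimality of $X$ propagates into the Seiberg--Witten constraints'' on the finite cover $\hat{X}$. That a finite cover of a minimal symplectic $4$-manifold is again minimal is not a formality that can be absorbed into the gluing analysis; it is precisely the main theorem of \cite{[HKo]}, and its proof is the bulk of that paper (for $b^+>1$ it rests on Taubes' identification of Seiberg--Witten with Gromov invariants and the resulting characterization of minimality; for $b^+=1$ on McDuff's classification and the work of Li--Liu). It cannot be skipped: your intended contradiction on $\hat{X}$ is that a symplectic manifold with nonzero Seiberg--Witten invariant splits off a negative definite summand with $b_2>0$, but that configuration is \emph{not} contradictory in general --- every blow-up realizes it. Only minimality of $\hat{X}$ itself rules it out, and neither symplectic nor smooth minimality descends to covers by any soft argument (a $(-1)$-sphere upstairs need not project to one downstairs). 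Two secondary gaps: (i) a summand with $b^+=0$ and $b_2=0$ need not be a rational homology sphere --- it can have $b_1>0$, e.g. $S^1\times S^3$; moreover, when a connected-sum decomposition is lifted to a finite cover, the Kurosh subgroup theorem produces free factors in the covering group, i.e. genuine $S^1\times S^3$ summands upstairs, and these require a separate Seiberg--Witten vanishing argument. (ii) The $b^+=1$ case is deferred and never resolved, yet it is part of the statement; in \cite{[HKo]} it needs the classification results just mentioned, not merely wall-crossing.
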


Finite groups and free groups are well-known examples of residually finite groups. Since the direct products of residually finite groups
are residually finite groups themselves, the previous result implies that all we need to worry about is producing minimal manifolds in order to
conclude on their irreducibility. This endeavor follows from Usher's theorem.\\ 

\begin{theorem} (Usher, \cite{[Ush]}). Let $X = Y\#_{\Sigma \equiv \Sigma}Y'$ be the
symplectic sum where the surfaces have genus greater than zero.
\begin{enumerate}
\item If either $Y - \Sigma$ or $Y' -\Sigma '$ contains an embedded symplectic
sphere of square -1, then $X$ is not minimal.
\item If one of the summands, say $Y$ for definiteness, admits the structure of
an $S^2$-bundle over a surface of genus $g$ such that $\Sigma$ is a section of
this $S^2$-bundle, then $X$ is minimal if and only if $Y'$ is minimal.
\item In all other cases, $X$ is minimal.\\
\end{enumerate}
\end{theorem}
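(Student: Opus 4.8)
The plan is to reduce the statement to the detection of embedded symplectic spheres of square $-1$, and then to analyze how such a sphere must behave under the neck-stretching degeneration associated to the symplectic sum. Recall that a symplectic $4$-manifold is minimal precisely when it contains no embedded symplectic sphere of square $-1$; by the work of Taubes and Li--Liu the classes carrying such exceptional spheres are exactly those detected by the Gromov (equivalently Seiberg--Witten) invariants, and for this argument I would take as input that an exceptional \emph{smooth} $-1$-sphere can always be represented symplectically. With this reduction the task becomes: understand the set of exceptional classes of $X = Y \#_{\Sigma \equiv \Sigma'} Y'$ in terms of those of $Y$ and $Y'$.

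The easy inclusion handles statement (1). If $Y - \Sigma$ (or $Y' - \Sigma'$) contains an embedded symplectic $-1$-sphere $S$, then, since $S$ lies in the complement of a tubular neighborhood of the gluing surface and the symplectic sum modifies $Y$ only inside such a neighborhood, $S$ persists as an embedded symplectic $-1$-sphere in $X$. Hence $X$ is non-minimal, with no further work. The same observation gives one direction of statement (2): if $Y'$ is non-minimal, its exceptional sphere survives into $X$, so minimality of $X$ forces minimality of $Y'$.

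The substance is the converse contained in statements (2) and (3): showing $X$ minimal when we are in neither exceptional situation. Here I would argue by contradiction. Choose an $\omega$-compatible almost complex structure on $X$ that is cylindrical along the gluing neck (the contact boundary being the unit normal circle bundle $S(\nu_\Sigma) \to \Sigma$, whose closed Reeb orbits are the fibres and their multiple covers), and represent a hypothetical exceptional class $E$ by a $J$-holomorphic sphere. Stretching the neck and invoking SFT/Gromov compactness produces a holomorphic building: a collection of punctured finite-energy curves living in the completions of $Y - \Sigma$ and $Y' - \Sigma'$, with punctures asymptotic to covers of the fibre orbit and matched across the neck. The homology class of the building projects to $E$ (with $E^2 = -1$), its total arithmetic genus is $0$, and the bookkeeping now proceeds from the relative adjunction formula for punctured curves (controlling genus via the asymptotic writhe) together with positivity of intersections.

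If the building does not genuinely cross the neck, then $E$ is carried on one side and yields a $-1$-sphere in $Y - \Sigma$ or $Y' - \Sigma'$, contradicting our being outside case (1). The hard part is the remaining case, where a component on, say, the $Y$-side is a low-genus curve asymptotic to the fibre orbit with multiplicity one. The relative adjunction and index constraints then force this component to be a sphere meeting $\Sigma$ transversally in a single point; running over the compactified family of such curves exhibits $Y$ as an $S^2$-bundle over a genus-$g$ surface with $\Sigma$ a section, that is, exactly case (2). In case (2) one completes the argument by gluing the $Y$-side fibre piece to the $Y'$-side component along $\Sigma'$ to produce an exceptional sphere in $Y'$, contradicting the minimality of $Y'$. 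I expect the genuine obstacle to be the compactness-and-matching analysis: controlling multiply-covered components, pinning down the multiplicities and signs of the asymptotic Reeb orbits, and ruling out every broken configuration that does not reduce either to a $-1$-sphere on one side or to the ruled structure.
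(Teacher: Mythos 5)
You should first be aware that the paper does not prove this statement at all: it is quoted verbatim as an external black box from Usher's paper \cite{[Ush]}, so there is no internal proof to compare against. Judged against Usher's published argument, your outline is in fact close in spirit: Usher likewise reduces minimality to the (non)existence of exceptional classes via the Seiberg--Witten/Gromov correspondence of Taubes and Li--Liu, and then analyzes how an exceptional curve in $X$ must distribute itself over the two sides under the degeneration associated to the symplectic sum --- although he runs this analysis through the Ionel--Parker symplectic sum formula for Gromov--Witten invariants rather than through an SFT neck-stretching/holomorphic-building compactness argument, which confines the multiple-cover and matching bookkeeping to the combinatorics of the sum formula instead of raw compactness analysis.

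As a proof, however, your sketch has genuine gaps. The most concrete one: your argument for the ``only if'' direction of (2), namely ``if $Y'$ is non-minimal, its exceptional sphere survives into $X$,'' is false as stated, because an exceptional sphere in $Y'$ may intersect $\Sigma'$; it then does not lie in $Y' - \Sigma'$ and does not survive the sum. The correct argument in case (2) is of a different nature: summing along a section of an $S^2$-bundle is (up to diffeomorphism/deformation) the trivial operation, so $X$ is identified with $Y'$ and minimality transfers through that identification. Second, the heart of the matter --- the compactness, matching, and multiple-cover analysis of the limiting configuration, and in particular the step where a multiplicity-one component meeting $\Sigma$ in a single point is promoted to the conclusion that $Y$ \emph{is} an $S^2$-bundle with $\Sigma$ a section --- is only announced, not carried out; that last step itself needs a nontrivial external input (McDuff's structure theory for symplectic $4$-manifolds containing embedded symplectic spheres of non-negative square, or an equivalent ruling argument). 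So the architecture is sound and parallels the known proof, but the proposal as written does not constitute a proof of the statement.
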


This theorem implies that the manifolds of Proposition 8 are minimal.\\


\section{Luttinger Surgery and its Effects on $\pi_1$}

Let $T$ be a Lagrangian torus inside a symplectic 4-manifold $M$. \emph{Luttinger surgery} (cf. \cite{[Lu]}, \cite{[ADK]}) is the surgical procedure of
taking out a tubular neighborhood of the torus nbh(T) in $M$ and gluing it back in, in such way that the resulting manifold admits a symplectic
structure. The symplectic form is unchanged away from a neighborhood of $T$. We proceed to give an overview of the process before we get into
the fundamental group calculations.\\

The Darboux-Weinstein theorem (cf. \cite{[MS]}) implies the existence of a parametrization of a tubular neighborhood $T\times D^2 \rightarrow
nbh(T) \subset M$ such that the image of $T\times \{d\}$ is Lagrangian for all $d\in D^2$. Let $d \in D - \{0\}$. The parametrization of the
tubular neighborhood provides us with a particular type of push-off $F_d: T\times \{d\} \subset M - T$ called the \emph{Lagrangian
push-off} or \emph{Lagrangian framing}. Let $\gamma \subset T$ be an embedded curve. Its image $F_d(\gamma)$ under the Lagrangian push-off is
called the \emph{Lagrangian push-off} of $\gamma$. These curves are used to parametrize the Luttinger surgery.\\

A \emph{meridian} of $T$ is a curve isotopic to $\{t\}\times \partial D^2 \subset \partial(nbd(T))$ and it is denoted by $\mu_t$. Consider two
embedded curves in $T$ which intersect transversally in one point and consider their Lagrangian push-offs $m_T$ and $l_T$. The group
$H_1(\partial(nbd(T)) = H_1(T^3)$ is generated by $\mu_T, m_t$ and $l_T$. We take advantage of the commutativity of $\pi_1(T^3)$ and choose a
basepoint $t$ on $\partial(nbh(T))$, so that we can refer unambiguously to $\mu_T, m_T, l_T \in \pi_1(\partial(nbd(T)), t)$.\\

Under this notation, a general torus surgery is the process of removing a tubular neighborhood of $T$ in $M$ and glue it back in such a way that the
curve representing $\mu_T^km_T^pl_T^q$ bounds a disk for some triple of integers $k, p,$ and $q$. In order to obtain a symplectic manifold
after the surgery, we need to set $k = \pm 1$ (cf. \cite{[BK3]}).\\

When the base point $x$ of $M$ is chosen off the boundary of the tubular neighborhood of $T$, the based loops $\mu_T, m_t$ and $l_T$ are to be
joined by the same path in $M - T$. By doing so, these curves define elements of $\pi_1(M - T, x)$. The 4-manifold $Y$ resulting from Luttinger surgery on $M$ has fundamental group

\begin{center}
$\pi_1(M - T) / N(\mu_T m^p_T l^q_T)$\\
\end{center}

where $N(\mu_T m^p_T l^q_T)$ denotes the normal subgroup generated by $\mu_T m^p_T l^q_T$.\\

We proceed now with the fundamental group calculations needed to prove Theorem 1. To do so, we plug into the previous general picture the information we have for the
telescoping triples. Let $(X, T_1, T_2)$ be a telescoping triple. The fundamental group of $X$ has the presentation $<t_1, t_2| [t_1, t_2] =
1>$. Let us apply $+ 1/p$ Luttinger surgery on $T_1$ along $l_{T_1}$ and call $Y_1$ the resulting manifold. Since the meridian $\mu_{T_1}$ is
trivial we have\\

\begin{center}
$\pi_1(Y_1) = \pi_1(X - T) / N(\mu_T m^0_{T_1} l_{T_1}^p) = \Z \oplus \Z / N(1 \cdot 1 \cdot l_{T_1}^p)$.\\
\end{center}

Thus, $\pi_1(Y_1) = <t_1, t_2| [t_1, t_2] = 1, t_2^p = 1>$.\\

Let us apply now $+ 1/q$ Luttinger surgery on $T_2$ along $m_{T_2}$ and call the resulting manifold $Y_2$ the resulting manifold. 
Since the meridian $\mu_{T_2}$ is
trivial we have

\begin{center}
$\pi_1(Y_2) = \Z \oplus \Z_p / N(1\cdot m^q_{T_2} \cdot 1) $.\\
\end{center}

Thus, $\pi_1(Y_1) = <t_1, t_2| [t_1, t_2] = 1, t_1^q = 1 = t_2^p>$.\\

The reader might have already noticed the symmetry of these calculations.

\begin{proposition} Let $(X, T_1, T_2)$ be a minimal telescoping triple. Let $l_{T_1}$ be a Lagrangian push off of a curve on $T_1$ 
and $m_{T_2}$ the Lagrangian push off of a curve on $T_2$ so that $l_{T_1}$ and $m_{T_2}$ generate $\pi_1(X)$.
\begin{itemize}
\item The minimal symplectic 4-manifold obtained by performing either $+1/p$ Luttinger surgery on $T_1$ along $l_{T_1}$ or $+1/p$ surgery on 
$T_2$ along $m_{T_2}$ has fundamental group isomorphic to $\Z \oplus \Z_p$.\\
\item The minimal symplectic 4-manifold obtained by performing $+1/p$ Luttinger surgery on $T_1$ along $l_{T_1}$ and
$+1/q$ surgery on $T_2$ along $m_{T_2}$ has fundamental group isomorphic to $\Z_q \oplus \Z_p$.\\
\end{itemize}
\end{proposition}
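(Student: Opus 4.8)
The plan is to reduce the statement to the Van Kampen computation carried out in the paragraphs preceding it, being careful to track which Lagrangian push-offs become generators of the fundamental group, and then to invoke that Luttinger surgery keeps a manifold symplectic and preserves minimality. The $\pi_1$-content is essentially the symmetric calculation already displayed just above the statement; the work is in justifying the identifications it uses and in handling the iterated (two-surgery) case.

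First I would fix the identification $\pi_1(X - T_i) \cong \Z^2$ for $i = 1, 2$. By condition (2) of the definition of a telescoping triple the inclusion induces an isomorphism $\pi_1(X - (T_1 \cup T_2)) \to \pi_1(X) \cong \Z^2$, and both meridians $\mu_{T_1}, \mu_{T_2}$ are nullhomotopic in the complement. Re-gluing the tubular neighborhood of $T_2$ is a Van Kampen amalgamation over $\pi_1(T^3) = \langle \mu_{T_2}, m_{T_2}, l_{T_2} \rangle$ whose only possible new relation, $\mu_{T_2} = 1$, is already in force; hence $\pi_1(X - T_1) \cong \Z^2$, and symmetrically $\pi_1(X - T_2) \cong \Z^2$. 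This is the isomorphism used implicitly in the preamble.

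For the first bullet, the Luttinger surgery formula applied to the $+1/p$ surgery on $T_1$ along $l_{T_1}$ gives fundamental group $\pi_1(X - T_1)/N(\mu_{T_1} m_{T_1}^0 l_{T_1}^p) = \Z^2/N(l_{T_1}^p)$, using $\mu_{T_1} = 1$. Here the hypothesis that $l_{T_1}$ and $m_{T_2}$ generate $\pi_1(X) = \Z^2$ is exactly what makes $l_{T_1}$ a free $\Z$-summand, so that killing its $p$-th power yields $\Z \oplus \Z_p$ rather than some other quotient of $\Z^2$; the surgery on $T_2$ along $m_{T_2}$ is the same argument with the roles of the two tori exchanged, which is the symmetry remarked on before the statement and settles the ``either \ldots\ or'' clause. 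For the second bullet I would perform the two surgeries in succession: since the $+1/p$ surgery on $T_1$ is supported in a neighborhood disjoint from $T_2$, the torus $T_2$ persists as a Lagrangian torus in $Y_1$ with $\pi_1(Y_1 - T_2) \cong \Z^2/N(l_{T_1}^p)$ and with $\mu_{T_2}$ still trivial there, so the surgery formula applied once more along $m_{T_2}$ gives $\Z^2/N(l_{T_1}^p,\, m_{T_2}^q) \cong \Z_q \oplus \Z_p$ by the same generating hypothesis.

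I expect the main obstacle to be exactly this iterated case, together with the minimality claim. For the iteration one must confirm that after the first surgery the second torus still meets all the hypotheses of the surgery formula — that it remains Lagrangian, that its meridian is still nullhomotopic in the complement, and that $m_{T_2}$ is still a generator of the (now quotient) group — all of which follow because the first surgery alters nothing near $T_2$ and the relevant relations only descend through the quotient. For minimality, the one genuinely external input is that Luttinger surgery does not destroy minimality; granting this and that the symplectic form is unchanged away from the surgered torus, the minimality of $(X, T_1, T_2)$ passes to $Y_1$ and $Y_2$, completing the proof.
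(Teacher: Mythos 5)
Your proposal is correct and follows essentially the paper's own route: the paper's proof of this proposition is exactly the Van Kampen/surgery-formula computation displayed in the paragraphs before the statement (triviality of the meridians, killing $l_{T_1}^p$ and $m_{T_2}^q$ in $\Z^2$, using the summand condition), which you merely flesh out with the identification $\pi_1(X - T_i) \cong \Z^2$ and the iteration of the two surgeries, while minimality is in both cases deferred to external results. The only (cosmetic) divergence is which citation carries the minimality claim: the paper points to repeated use of Lemma 2 of \cite{[BK3]} together with Usher's theorem \cite{[Ush]}, whereas you invoke the fact that Luttinger surgery preserves minimality.
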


The proof is ommited. It is based on a repeated use of Lemma 2 in \cite{[BK3]} and Usher's theorem (cf. \cite{[Ush]}). The reader is
suggested to look at the proofs of theorems 8, 10 and 13 of \cite{[BK3]} for a blueprint to the proof.\\

%
%

\section{Proof of Theorem 1}

\begin{proof} The possible choices for characteristic numbers in Theorem 1 are in a one-to-one correspondence with the telescoping triples
of Proposition 8. The enumeration indicates that, in order to produce the manifold in Theorem 1 with one of the choices for characteristic numbers claimed in
item $\#$ (k), we start with the telescoping triple of item $\#$ (k) in Proposition 8 ($k\in \{1, 2, 3, 4, 5, \ldots, 14,
15\}$). Let $S:= (X, T_1, T_2)$ be the chosen minimal telescoping
triple. The manifolds of Theorem 1 are produced by applying Luttinger surgery to S according to the choice of
characteristic numbers. By Proposition 11 we know that out of $S$ one produces two symplectic manifolds: $Y_1$ with $\pi_1 = \Z \oplus \Z_p$
and $Y_2$ with $\pi_1 = \Z_q \oplus \Z_p$. Since Luttinger surgery does not change the Euler characteristic nor the
signature, the resulting manifolds $Y_1$ and $Y_2$ share the same characteristic numbers as $X$.\\

Proposition 11 states that $Y_1$ and $Y_2$ are minimal. By Hamilton-Kotschick result, both of them are irreducible. The calculation of the
characteristic numbers of $Y_1$ and $Y_2$ is straight-forward. Since our chosen $S$ was arbitrary, this finishes the proof.\\
\end{proof}

\section{Exotic Smooth Structures on 4-Manifolds with Abelian Finite Non-Cylic $\pi_1$}

The purpose of this section is to put on display the exotic smooth structures for the manufactured manifolds having $\pi_1 =
\Z_p\oplus \Z_p$, i.e., to prove Proposition 2.\\

\subsection{Smooth Topological Prototype}

We proceed to construct the underlying smooth manifold on which infinitely many exotic smooth structures will be
displayed. Start with the product of a Lens space and a circle: $L(p, 1)\times S^1$. Its Euler characteristic is zero 
as well as its signature. Consider the map\\

\begin{center}
$L(p, 1)\times S^1 \rightarrow L(p, 1)\times S^1$\\

$\{pt\} \times \alpha \mapsto \{pt\} \times \alpha^p$\\
\end{center}

We perform surgery on $L(p, 1)\times S^1$: cut out the loop $\alpha^p$ and glue in a disc in order to kill the
corresponding generator\\
\begin{center}
$\widetilde{L(p, 1)\times S^1} := L(p, 1)\times S^1 - (S^1\times D^3) \cup S^2\times D^2$.\\
\end{center}

The resulting manifold has zero signature and Euler characteristic two. By the Seifert-Van Kampen theorem, one concludes
$\pi_1(\widetilde{L(p, 1)\times S^1}) = \Z_p \oplus \Z_p$. \\

Since we are aiming at non-spin manifolds, our topological prototypes will have the shape\\

\begin{center}
$b_2^+\mathbb{CP}^2 \# b_2^-\overline{\mathbb{CP}}^2 \# \widetilde{\L(p, 1)\times S^1}$\\
\end{center}

but spin 4-manifolds with $\pi_1 = \Z_p \oplus \Z_p$ are also built in such a straight-forward manner.\\

\subsection{An infinite family $\{X_n\}$} We apply now the procedure described in \cite{[FS]} and \cite{[FPS]} to
produce infinitely many distinct smooth structures on any of our topological prototypes. Let $X_0$ be the manifold obtained by applying $+1/p$
Luttinger surgery on $T_2$ along $l_{T_2}$ to any of the manifolds from the telescoping triples previously
constructed. Since $X_0$ is a minimal symplectic manifold with $b_2^+ = 2$, its Seiberg-Witten invariant is 
non-trivial by \cite{[Ta1]}.\\

The infinite family $\{X_n\}$ is obtained by applying a $+ n/p$ torus surgery to $X_0$ on $T_1$ along $m_{T_1}$.
Notice that now $k = n$ acording to our notation of Section 4; only the case $k = 1 = n$ produces a symplectic
manifold.  We take a closer look at the process to see that we comply with the hypothesis of Corollary 2 in
\cite{[FPS]}.\\

The boundary of the tubular neighborhood of $T_1$ in $X_0$ is a 3-torus whose fundamental group is generated by the
loops $\mu_{T_1}, m_{T_1}$ and $l_{T_1}$. Notice that in $\pi_1(X_0 - T_1)$, the meridian is trivial 
$\mu_{T_1} = 1$, $m_{T_1} = x$ and $l_{T_1} = 1$, where $x$ is a generator in $\pi_1(X_0) = \Z_p \oplus \Z x$. The manifolds in the
family $\{X_n\}$ can be described as the result of applying to $X_0$ a $n/p$ surgery on $T_1$ along $m_{T_1}$, and so
$\mu_{T_1}^nm_{T_1} = x^p$ is killed.\\

Let $X$ be the manifold obtained from $X_0 - T_1$ by gluing a thick torus $T^2\times D^2$ in a manner that $\gamma
= S^1\times \{1\} \times \{1\}$ is sent to $l_{T_1}$, $\lambda = \{1\}\times S^1 \times \{1\}$ is sent to
$\mu_{T_1}$, and $\mu_X = \{(1, 1)\}\times \partial D^2$ is sent to $m_{T_1}^{-p}$. If $n \neq 1$, the manifold
$X$ will not be symplectic, but in any case $\pi_1(X) = \Z_p\oplus \Z_p$. Denote by $\Lambda \subset X$ the core torus 
of the surgery.\\

Notice that given the identifications on the loops during the surgery, $\lambda = \mu_{T_2} = 1$, thus it is nullhomotopic in $X_0 - T_1 = X - \Lambda$; in particular,
$\lambda$ is nullhomologous. The torus surgery kills one generator of $H_1$ and two generators of $H_2$; $\Lambda$ 
is a nullhomologous torus. One obtains a manifold $X_n$ by applying 1/n surgery on $\Lambda$ along $\lambda$ with 
$\pi_1(X_n) = \Z_p \oplus \Z_p$. The manifold $X_0$ can be recovered from $X$ by applying a $0/1$ surgery on $\Lambda$ along $\lambda$.\\

By Corollary 2 in \cite{[FPS]}, we produce an infinite family $\{X_n\}$ of pairwise non-diffeomorphic 4-manifolds. These manifolds
will have the same cohomology ring as the corresponding topological prototype. Thus we have the following lemma.

\begin{lemma} There exists an infinite family $\{X_n\}$ of pairwise non-diffeomorphic irreducible non-symplectic 
4-manifolds with $\pi_1 = \Z_p \oplus \Z_p$ sharing the same Euler characteristic, signature and type as a given
topological prototype constructed in the previous subsection.
\end{lemma}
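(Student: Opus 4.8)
The plan is to assemble $\{X_n\}$ from the nullhomologous torus surgery already set up above and then read off each adjective of the statement in turn. First I would fix $X_0$ to be the minimal symplectic $4$-manifold produced by the $+1/p$ Luttinger surgery on $T_2$ along $l_{T_2}$. Since $b_2^+(X_0)=2>1$, Taubes' theorem \cite{[Ta1]} guarantees that its Seiberg-Witten invariant is non-trivial, with the canonical class as a basic class. I would then define $X_n$ as the result of $1/n$ surgery on the core torus $\Lambda\subset X$ along $\lambda$, exactly as above, recovering $X_0$ by the $0/1$ surgery and singling out the symplectic member as the value $n=1$ (the only $k=1$ case).

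The invariants are checked before anything analytic. Because $\Lambda$ is nullhomologous and $\lambda=\mu_{T_2}$ is itself nullhomotopic in $X_0-T_1=X-\Lambda$, the $1/n$ surgeries add neither generators nor relations, so $\pi_1(X_n)=\Z_p\oplus\Z_p$ for every $n$, as observed in the preceding discussion. A torus surgery along a nullhomologous torus leaves $H_2$, and hence the intersection form together with $w_2$, unchanged; therefore the Euler characteristic, the signature and the type of $X_n$ all coincide with those of $X_0$, and thus with the fixed topological prototype, and every $X_n$ shares its cohomology ring. This disposes of all the ``sharing'' claims.

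Next I would invoke Corollary 2 of \cite{[FPS]}, whose hypotheses are precisely what the construction supplies: $X_0$ is symplectic with non-trivial Seiberg-Witten invariant, and $\Lambda$ is a nullhomologous torus carrying the nullhomologous loop $\lambda$. The corollary computes the small-perturbation Seiberg-Witten invariants of the $X_n$ through the Morgan-Mrowka-Szab\'o surgery formula and shows that they take pairwise distinct values, so $\{X_n\}$ consists of pairwise non-diffeomorphic manifolds. This is the step that carries the genuine analytic content and is the main obstacle, which is exactly why it is delegated to \cite{[FPS]}; all I would need to verify explicitly is that $\lambda$ is nullhomologous so that the neck contribution in the formula drops out and the dependence on $n$ survives.

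Finally I would extract the last two adjectives from this same Seiberg-Witten picture. Non-symplecticity for $n\neq 1$ follows from Taubes' constraint that a minimal symplectic $4$-manifold with $b_2^+>1$ must possess a basic class on which the invariant equals $\pm 1$, whereas the surgery formula scales the invariants of $X_0$ so that those of $X_n$ attain the value $\pm n$, impossible once $|n|\geq 2$. For irreducibility I would argue that in a putative splitting $X_n=M_1\#M_2$ the group $\Z_p\oplus\Z_p$, being finite and hence freely indecomposable, forces one summand, say $M_2$, to be simply connected; non-vanishing of the Seiberg-Witten invariant of $X_n$ then forces $b_2^+(M_2)=0$, so by Donaldson's theorem $M_2$ is a connected sum of copies of $\overline{\mathbb{CP}}^2$ or a homotopy $4$-sphere. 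A $\overline{\mathbb{CP}}^2$-summand would introduce exceptional pairs into the basic-class set of $X_n$ via the blow-up formula, contradicting the blow-up-free structure inherited from the minimal $X_0$; hence $M_2$ is a homotopy $4$-sphere and $X_n$ is irreducible, which completes the proof.
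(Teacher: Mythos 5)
Your proposal reproduces the paper's own proof essentially step for step: the same $X_0$ obtained by the $+1/p$ Luttinger surgery on $T_2$ along $l_{T_2}$, the same reinterpretation of the $n/p$ torus surgeries on $T_1$ as $1/n$ surgeries on the nullhomologous core torus $\Lambda$ along the nullhomologous loop $\lambda$, the same appeal to Corollary 2 of Fintushel--Park--Stern for pairwise non-diffeomorphism, and the same observation that the surgeries leave the cohomology ring (hence Euler characteristic, signature and type) unchanged. Where you go beyond the paper --- the sketches of non-symplecticity via Taubes' constraint and of irreducibility via the connected-sum analysis --- the paper merely asserts these properties, so your write-up is if anything more complete; the one caveat is that Donaldson's theorem only gives a summand \emph{homeomorphic}, not diffeomorphic, to a connected sum of copies of $\overline{\mathbb{CP}}^2$, so that step should invoke the Seiberg--Witten connected-sum formula for a simply connected negative definite summand rather than the literal blow-up formula.
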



\subsection{Homeomorphism Criteria} Now we need to see that the manifolds produced share indeed the same underlying
topological prototype. Ian Hambleton and Matthias Kreck proved the needed homeomorphism criteria in \cite{[HK93]} (theorem B). They showed that 
topological 4-manifolds with odd order fundamental group and large Euler characteristic are classified up to
homeomorphism by explicit invariants.\\

The precise statement of their result includes a lower bound for the Euler 
characteristic in terms of an integer number $d(\pi)$, which depends on the fundamental group of the manifold. We proceed to explain the 
notation employed.\\

Let $\pi_1 = \pi$ be a finite group and let $d(\pi)$ be the minimal $\Z$-rank for the abelian group $\Omega^3 \Z \otimes_{\Z[\pi]} \Z$. One minimizes over all representatives 
of $\Omega^3\Z$, the kernel of a projective resolution of length three (cf. \cite{[HK]}) of $\Z$ over the group ring $\Z[\pi]$. In
particular, $\Omega^3 \Z$ is a submodule of $\pi_2(X)$. The
minimal representative is given by $\pi_2(K)$, where $K$ is a two-complex with the given $\pi_1$.\\

The result we will use in order to conclude on the homeomorphism type of our manifolds is the following\\

\begin{theorem} (Hambleton-Kreck, cf \cite{[HK93]}). Let $M$ be a closed oriented manifold of dimension four, and let $\pi_1(X) = \pi$ be a finite group of
odd order. When $\omega_2(\tilde{X}) = 0$ (resp. $\omega_2(\tilde{X}) \neq 0$), assume that
\begin{center}
$b_2(X) - |\sigma(X)| > 2 d(\pi)$,
\end{center}
$(resp.  >2d(\pi) + 2)$. Then $M$ is classified up to homeomorphism by the signature, Euler characteristic, type,
Kirby-Siebenmann invariant, and fundamental class in $H_4(\pi, \Z) / Out(\pi)$.\\
\end{theorem}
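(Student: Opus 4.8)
The plan is to invoke Kreck's modified surgery theory. Since $\pi$ is finite and hence a good group in the sense of Freedman, the topological s-cobordism theorem is available, so it suffices to construct an s-cobordism between any two closed oriented $4$-manifolds carrying the listed invariants. First I would determine the normal $1$-type of the manifolds in question, which is governed by $\pi$ together with $\omega_2(\tilde{X})$. Here the odd order of $\pi$ already pays off: the mod-$2$ cohomology of $B\pi$ vanishes in positive degrees, so $w_2(X)$ is entirely determined by $\omega_2(\tilde{X})$, and the \emph{type} invariant records precisely the resulting spin / non-spin dichotomy. In each of the two cases one computes the bordism group $\Omega_4$ of the corresponding normal $1$-type and checks that its classes are detected by the signature, the Euler characteristic, the Kirby--Siebenmann invariant, and the image of the fundamental class under the classifying map $M \to B\pi$, which lies in $H_4(\pi;\Z)$ up to the $\mathrm{Out}(\pi)$-ambiguity in the identification $\pi_1(M) \cong \pi$.

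Next I would argue that any two manifolds $M_0$ and $M_1$ sharing all of these invariants are normally bordant over the common normal $1$-type, and then run modified surgery on a bordism $W$ between them. Surgery below the middle dimension makes the reference map $W \to B$ highly connected relative to $\partial W$, and the remaining obstruction to turning $W$ into an s-cobordism is concentrated in the middle-dimensional equivariant intersection data, i.e.\ a hermitian form over the group ring $\Z[\pi]$. The odd order of $\pi$ is again decisive: it forces the relevant $L$-theoretic obstruction groups to behave tractably and allows one to classify the equivariant intersection form of each $M_i$ by its rank, signature, and discriminant, so that after adjunction of hyperbolic summands the two forms become isometric.

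The heart of the argument, and the step I expect to be the main obstacle, is a cancellation theorem for hermitian forms over $\Z[\pi]$ in the stable range. The quantity $d(\pi)$ is the minimal $\Z$-rank needed to represent $\Omega^3\Z$, and the hypothesis $b_2(X) - |\sigma(X)| > 2d(\pi)$ (respectively $> 2d(\pi) + 2$ in the non-spin case) is exactly what guarantees that the equivariant intersection form carries enough hyperbolic summands to absorb the surgery obstruction and to trigger the cancellation result. Showing that this bound suffices to \emph{cancel}, rather than merely to stabilize, is where the odd-order hypothesis and the fine arithmetic of forms over $\Z[\pi]$ do the real work. Once cancellation succeeds, the middle-dimensional surgeries can be completed, yielding an s-cobordism from $M_0$ to $M_1$; the topological s-cobordism theorem then upgrades this to a homeomorphism $M_0 \cong M_1$, establishing that the signature, Euler characteristic, type, Kirby--Siebenmann invariant, and fundamental class in $H_4(\pi;\Z)/\mathrm{Out}(\pi)$ form a complete set of homeomorphism invariants.
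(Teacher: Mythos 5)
This statement is not proved in the paper at all: it is Theorem B of Hambleton--Kreck \cite{[HK93]}, quoted verbatim and used as a black box for the homeomorphism criteria of Section 6.3. So there is no internal proof to compare your attempt against; the only fair comparison is with the original argument of \cite{[HK93]}, and on that score your outline does track the genuine strategy --- Kreck's modified surgery, the normal $1$-type determined by $\pi$ and $\omega_2$ of the universal cover (simplified by the vanishing of the mod-$2$ cohomology of an odd-order group), a bordism computation detecting the listed invariants, a cancellation theorem over $\Z[\pi]$, and finally Freedman's topological s-cobordism theorem, which applies because finite groups are good.

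However, as a proof your proposal has a genuine gap, and you name it yourself: the cancellation theorem in the stable range is ``the step I expect to be the main obstacle,'' and you never supply it. That step is not a technical detail --- it is the entire mathematical content of the theorem. The hypothesis $b_2(X) - |\sigma(X)| > 2d(\pi)$ (resp. $> 2d(\pi)+2$) exists precisely to feed that cancellation result, and deriving the bound, including the extra $+2$ in the non-spin case, requires the fine structure theory of quadratic forms and modules over $\Z[\pi]$ that Hambleton and Kreck develop in \cite{[HK]} and \cite{[HK93]}; asserting that the bound ``is exactly what guarantees'' cancellation is circular as written. The same is true, to a lesser degree, of the bordism computation: you assert that the classes of the relevant bordism group are detected by signature, Euler characteristic, Kirby--Siebenmann invariant, and the image of the fundamental class in $H_4(\pi;\Z)/\mathrm{Out}(\pi)$, but this identification (and the fact that no further invariants appear for odd-order $\pi$) must be computed, e.g. via the Atiyah--Hirzebruch or James spectral sequence for the normal $1$-type. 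In short: correct roadmap, but the two load-bearing steps are postulated rather than proven, so the attempt does not constitute a proof of the statement.
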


Notice that since $p\geq 3$ is assumed to be a prime number, $\pi_1$ has odd order and no 2-torsion. Therefore, the type of the manifold is
indicated by the parity of its intersection form over $\Z$. All of our manufactured manifolds are non-spin; since they are smooth, the 
Kirby-Siebenmann invariant vanishes.\\ 

For the finite groups $\pi = \Z_p \oplus \Z_p$, we claim
\begin{center}
$d(\pi) = 1$.\\
\end{center}

We are indebted to Matthias Kreck for explaining us the argument \cite{[Kr]}. Assume $\pi = \pi_1$ is a finite group and let $K$ be a 2-complex
with fundamental group $\pi_1$. The minimal Euler characteristic of a $K$ is given by $d(\pi) + 1$. We claim $d(\pi) = 1$.\\

Consider the map from $K$ to the Eilenberg-MacLane space $K(\pi, 1)$ which induces an isomorphism on $\pi_1$. Then the induced map on $H_2(K; \Z_p)$
is surjective. Thus, the Euler characteristic of $K$ is greater or equal than 3 - 2 + 1. This implies $d(\pi)$ is greater or equal than 1.\\

To conclude now $d(\pi) = 1$, consider the standard presentation of $Z_p\oplus \Z_p$ given by
\begin{center}
$<x, y | x^p = 1, y^p = 1, [x, y] = 1>$.\\
\end{center}

The 2-complex realising this presentation has Euler characteristic $2 = d(\pi) + 1$. Therefore, $d(\pi) =1$ as claimed.\\

In order to conclude on the homeomorphism type of our manufactured 
manifolds, we only need to know the numerical 
invariants $b_2^+$ and $b_2^{-}$ which need to satisfy\\

\begin{center}
$b_2(X) - |\sigma(X)| > 4$.\\
\end{center}


\subsection{Proof of Proposition 2}

The proof of Proposition 2 is now clear if one rewrites it in the following form. Using , $n\geq 2$ if $m = 0$ or does not appear; $m\geq
2$ if $n = 0$\\

\begin{proposition} Assume $n + m \geq 2$. The manifolds
\begin{center}
$b_2 ^+\mathbb{CP}^2 \# b_2 ^{-} \overline{\mathbb{CP}}^2 \# \widetilde{L(p, 1)\times S^1}$ \\
\end{center}

with the following coordinates admit infinitely many exotic irreducible smooth structures, only one of which is symplectic.

\begin{enumerate}
\item $(b_2^+, b_2^-) = (2n - 1, 3n - 1)$,\\
\item $(b_2^+, b_2^-) = (2n - 1, 5n - 1)$,\\
\item $(b_2^+, b_2^-) = (2n - 1, 6n - 1)$,\\
\item $(b_2^+, b_2^-) = (2n - 1, 8n - 1)$,\\
\item $(b_2^+, b_2^-) = ((2 + 2g)n - 1, (4 +2g)n - 1)$,\\
\item $(b_2^+, b_2^-) = (2n + (2 + 2g)m - 1, 3n + (4 +2g)m - 1)$,\\
\item $(b_2^+, b_2^-) = (2n + 2m - 1, 3n + 5m - 1)$,\\
\item $(b_2^+, b_2^-) = (2n + 2m - 1, 3n + 6m - 1)$,\\
\item $(b_2^+, b_2^-) = (2n + 2m - 1, 3n + 8m - 1)$,\\
\item $(b_2^+, b_2^-) = ((2 + 2g)n + 2m - 1, (4 +2g)n + 5m - 1)$,\\
\item $(b_2^+, b_2^-) = ((2 + 2g)n + 2m - 1, (4 +2g)n + 6m - 1)$,\\
\item $(b_2^+, b_2^-) = ((2 + 2g)n + 2m - 1, (4 +2g)n + 8m - 1)$,\\
\item $(b_2^+, b_2^-) = (2n + 2m - 1, 5n + 6m - 1)$,\\
\item $(b_2^+, b_2^-) = (2n + 2m - 1, 5n + 8m - 1)$,\\
\item $(b_2^+, b_2^-) = (2n + 2m - 1, 6n + 8m - 1)$.\\
\end{enumerate}

\end{proposition}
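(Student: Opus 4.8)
The plan is to match each of the fifteen coordinate pairs to the telescoping triple of Proposition 8 that produces it. The enumeration is arranged, exactly as in the proof of Theorem 1, so that item $(k)$ here arises from item $(k)$ of Proposition 8; then I would run the construction of Subsection 6.2 on that triple to manufacture the infinite family $\{X_n\}$, and finally invoke the Hambleton--Kreck classification of Subsection 6.3 to identify every member of the family with the stated prototype. It therefore suffices to treat a single arbitrary minimal telescoping triple $S = (X, T_1, T_2)$ and bookkeep its characteristic numbers.

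First I would record the dictionary between $(e, \sigma)$ and $(b_2^+, b_2^-)$. Since $\pi = \Z_p \oplus \Z_p$ is finite we have $b_1 = b_3 = 0$, so $e = 2 + b_2^+ + b_2^-$ and $\sigma = b_2^+ - b_2^-$, whence
\begin{center}
$b_2^+ = \tfrac{1}{2}(e - 2 + \sigma)$ \quad and \quad $b_2^- = \tfrac{1}{2}(e - 2 - \sigma)$.
\end{center}
Because the Luttinger surgery and the $n/p$ torus surgeries of Subsection 6.2 leave $e$ and $\sigma$ untouched, every $X_n$ inherits the Euler characteristic and signature of $X$. Substituting the values tabulated in Proposition 8 into these two formulas reproduces the fifteen listed pairs; for instance item $(1)$ comes from $(A_n, T_1, T_2)$ with $(e, \sigma) = (5n, -n)$, giving $(2n-1, 3n-1)$, and the remaining cases are identical arithmetic.

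Next I would apply the infinite-family lemma of Subsection 6.2, which already delivers $\{X_n\}$ as pairwise non-diffeomorphic irreducible $4$-manifolds with $\pi_1 = \Z_p \oplus \Z_p$, all sharing the same $e$, $\sigma$ and type, exactly one of which (the $n = 1$ member) is symplectic. To place them all on the prototype I would feed these data into the Hambleton--Kreck theorem: $p > 2$ prime makes $\pi$ of odd order without $2$-torsion, the manifolds are non-spin and smooth (so the type is odd and the Kirby--Siebenmann invariant vanishes), and with $d(\pi) = 1$ from Subsection 6.3 the hypothesis becomes $b_2 - |\sigma| > 4$. Since every triple of Proposition 8 has $\sigma < 0$, one computes $b_2 - |\sigma| = (b_2^+ + b_2^-) - (b_2^- - b_2^+) = 2 b_2^+$, and the standing assumption $n + m \geq 2$ forces $b_2^+ \geq 3$ in each of the fifteen cases, so $b_2 - |\sigma| \geq 6 > 4$ throughout.

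The step I expect to be the main obstacle is verifying that the full Hambleton--Kreck invariant list genuinely coincides across the whole family and with the chosen connected sum. Signature, Euler characteristic, type and Kirby--Siebenmann are immediate from the above, but one must also argue that the image of the fundamental class in $H_4(\pi; \Z)/\mathrm{Out}(\pi)$ and the class $\omega_2(\tilde X)$ are common to all the $X_n$ and to the prototype. The former should follow because the surgeries of Subsection 6.2 are supported off a $2$-complex carrying the isomorphism $\pi_1 \to \pi$, so the classifying map to $K(\pi, 1)$, and hence its degree-four image, is unaffected; the latter is what selects the inequality, and since the $\mathbb{CP}^2$ summands keep the universal cover non-spin we are in the $\omega_2(\tilde X) \neq 0$ branch responsible for the bound $b_2 - |\sigma| > 4$ used above. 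Once these coincidences are confirmed, the classification identifies each $X_n$ with the prototype, and Subsection 6.2 supplies the asserted infinitely many exotic irreducible smooth structures, only one of which is symplectic.
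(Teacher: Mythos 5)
Your proposal is correct and takes essentially the same route as the paper: the infinite families come from Lemma 12 (Subsection 6.2), the identification of the homeomorphism type from the Hambleton--Kreck theorem together with $d(\pi)=1$, and the coordinates from the one-to-one correspondence with Proposition 8 and Theorem 1. You merely make explicit what the paper leaves to ``the discussion that follows,'' namely the dictionary $b_2^{\pm}=\tfrac{1}{2}(e-2\pm\sigma)$ and the check that $n+m\geq 2$ gives $b_2-|\sigma|=2b_2^{+}\geq 6>4$.
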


\begin{proof} The infinite families are provided by Lemma 12. Choosing the topological prototype accordingly to the coordinates, by Theorem 12 and the
discussion that follows we conclude on the homeomorphism type. Notice that the enumeration of the coordinates presented in Proposition 14 correspond
exactly to the ones in Theorem 1.
\end{proof}


\begin{thebibliography}{99}
\bibitem{[ABBKP]} A. Akhmedov, S. Baldridge, R. I. Baykur, P. Kirk and B.D. Park, \emph{Simply Connected Minimal Symplectic 4-Manifolds
with Signature less than -1}, J. Eur. Math. Soc. (to appear), arXiv 0705.0778.
\bibitem{[AP]} A. Akhmedov and B. D. Park, \emph{Exotic Smooth Structures on Small 4-Manifolds with Odd Signatures}, 2007. Preprint...
\bibitem{[ADK]} D. Auroux, S. K. Donaldson and L. Katzarkov, \emph{Luttinger surgery along Lagrangian tori and non-isotopy
for singular symplectic plane curves}, Math. Ann. 326 (2003) 185 - 203 MR1981618.
\bibitem{[BK3]} S. Baldridge and P. Kirk, \emph{Constructions of Small Symplectic 4-Manifolds using Luttinger Surgery},
Journal of Differential Geometry, 82, No. 2 (2009),  317-362. 
\bibitem{[BK4]} S. Baldridge and P. Kirk, \emph{Luttinger Surgery and Interesting Symplectic 4-Manifolds with Small Euler
Characteristic}, arXiv 0701400.
\bibitem{[BK5]} S. Baldridge and P. Kirk, \emph{On Symplectic 4-Manifolds With Prescribed Fundamental Group}, Comentarii Math. Helv. 82 (2007).
\bibitem{[MS]} D. McDuff and D. Salamon, \emph{Introduction to Symplectic Topology}. Second Edition. Oxford Mathematical Monographs. Oxford University 
Press, New York, 1998. x + 486 pp.
\bibitem{[FPS]} R. Fintushel, B.D. Park and R. Stern, \emph{Reverse Engineering small 4-manifolds}, Algebraic and Geometric Topology 7
(2007) 2103-2116.
\bibitem{[FS]} R. Fintushel and R. Stern \emph{Surgery on nullhomologous tori and simply connected 4-manifolds with $b^+ =
1$}, Journal of Topology, 1 (2008), 1-15.
\bibitem{[HK]} I. Hambleton and M. Kreck, \emph{Cancellation of Lattices and Finite Two-Complexes}, J. Reine Angew. Math., 442 (1993), 91-109.
\bibitem{[HK93]} I. Hambleton and M. Kreck, \emph{Cancellation, Elliptic Surfaces and  the Topology of Certain 
Four-Manifolds}, J. Reine Angew. Math., 444 (1993), 79-100.
\bibitem{[HKo]} M. J. D. Hamilton and D. Kotschick, \emph{Minimality and irreducibility of Symplectic Four-Manifolds}. Int. Math. Res. Not.
vol. 2006, article ID 35032, 13pp.
\bibitem{[Kr]} M. Kreck, \emph{Private Communication} 2009.
\bibitem{[Go]} R. E. Gompf, \emph{A new construction of symplectic manifolds}, Ann. of Math. (2) 142 (1995), no. 3, 527-595.
\bibitem{[Lu]} K. M. Luttinger, \emph{Lagrangian Tori in $\mathbb{R}^4$}, J. Diff. Geom. 42 (1995) 220-228.
\bibitem{[OS]} B. Ozbagci and A. Stipsicz, \emph{Noncomplex smooth 4-Manifolds with genus-2 Lefschetz fibrations}, Proceedings of the AMS. vol 128, no. 10
(2000),3125 - 3128.
\bibitem{[JP]} J. Park, \emph{The Geography of Symplectic 4-Manifolds with an Arbitrary Fundamental Group}. Proceedings of 
the AMS. Vol 135, No. 7 (2007) 2301-2307.
\bibitem{[IS]} I. Smith, \emph{Symplectic geometry of Lefschetz fibrations}, Dissertation, Oxford, 1998.
\bibitem{[Ta1]} C. H. Taubes, \emph{The Seiberg-Witten invariants and symplectic forms}, Math. Res. Lett. 1 (1994), no. 6,
809 - 822. 
\bibitem{[To]} R. Torres, \emph{On the geography and botany of irreducible 4-manifolds with abelian fundamental group}., arXiv 0903.5503.
\bibitem{[Ush]} M. Usher, \emph{Minimality and symplectic sums}. Int. Math. Res. Not. 2006, Art. ID 49857, 17
pp.

\end{thebibliography}
\end{document}